\documentclass[reqno]{amsart}

\usepackage[T1]{fontenc}
\usepackage{newunicodechar}

\usepackage{amsmath}
\usepackage{amsthm}
\usepackage{amssymb}
\usepackage{amscd}
\usepackage{amsfonts}
\usepackage{amsbsy}

\usepackage{graphicx}
\usepackage[dvips]{psfrag}
\usepackage{subfigure}
\usepackage{tikz}
\usepackage{pdflscape}
\DeclareGraphicsExtensions{.pdf,.eps,.png,.jpg}

\usepackage{eurosym}
\usepackage{enumerate}
\usepackage[all]{xy}
\usepackage[letterpaper,hmargin=2.5cm,vmargin=2.5cm]{geometry}

\usepackage{xcolor}
\usepackage[colorlinks=true,linkcolor=blue,citecolor=blue]{hyperref}

\usepackage{orcidlink}

\providecommand{\U}[1]{\protect\rule{.1in}{.1in}}

\newtheorem{theorem}{Theorem}

\newtheorem{conclusion}[theorem]{Conclusion}

\newtheorem{proposition}[theorem]{Proposition}

\begin{document}

\title[Macroeconomic Keen-Goodwin Model with Minsky Debt]{Endogenous Cycles in a Keen--Goodwin Model with Minsky Debt}

\author[Albarrán]{Roberto Albarrán-Garc\'{\i}a \orcidlink{ 0009-0006-4591-7316}}
\address{Departamento de Matem\'aticas, UAM--Iztapalapa, 09310 Iztapalapa, Mexico City, Mexico}
\email{albarrangr74@live.com.mx}

\author[Alvarez]{Martha Alvarez-Ram\'{\i}rez \orcidlink{0000-0001-9187-1757}}
\address{Departamento de Matem\'aticas, UAM--Iztapalapa, 09310 Iztapalapa, Mexico City, Mexico}
\email{mar@xanum.uam.mx}

\author[Garc\'{\i}a]{Carlos Garc\'{\i}a-Azpeitia \orcidlink{0000-0002-6327-1444}}
\address{Departamento de Matem\'aticas y Mec\'anica, IIMAS--UNAM, Apdo. Postal 20-126, Col. San \'Angel, Mexico City, 01000, Mexico}
\email{cgazpe@mym.iimas.unam.mx}

\begin{abstract}
We analyze a three-dimensional Keen--Goodwin model that couples wage--employment dynamics with Minsky-style private debt. At zero real interest the interior equilibrium is nonhyperbolic and organized by a two-dimensional center manifold foliated by neutral Goodwin cycles. Introducing a small positive interest rate unfolds this degeneracy: we derive an explicit Hopf condition, prove persistence of the center manifold as a normally hyperbolic attracting surface, and obtain first-order amplitude and frequency corrections for the emergent limit cycle via phase--amplitude reduction. Numerical simulations support the asymptotic predictions and demonstrate how interest rates determine and modulate endogenous business cycles.
\end{abstract}

\subjclass[2020]{37G15, 37C75, 91B62, 34C23}
\keywords{Hopf bifurcation, business cycles, dynamical systems, Keen model, center manifold}
\maketitle

\section{Introduction}
 Financial fragility and endogenous business cycles are central puzzles in modern macroeconomics: why do advanced economies alternate between long expansions and sudden crises even in the absence of large exogenous shocks? This paper brings rigorous dynamical‑systems tools to a parsimonious, economically grounded Keen-Goodwin model that explicitly couples wage-employment dynamics with private debt accumulation. It shows how this feature arises as a Hopf bifurcation in the Keen model: small changes in financial conditions -- most notably the real interest rate -- can convert a continuum of neutral Goodwin cycles into a single, economically meaningful limit cycle. We derive explicit conditions linking behavioral functions and financial parameters to the emergence and stability of those cycles and illustrate the results with numerical examples.

Our contributions are threefold. First, we provide a clean, verifiable Hopf‑bifurcation condition at the interior (good) equilibrium, expressed directly in terms of the Phillips curve and the investment function. Second, we prove that the two‑dimensional center manifold that organizes the zero‑interest dynamics persists as a normally hyperbolic, locally attracting invariant manifold for small positive interest rates, justifying a rigorous reduction to planar dynamics. Third, using phase--amplitude reduction and adjoint modes, we derive first‑order amplitude and frequency corrections for the emerging limit cycle, producing computable formulas that connect macroeconomic parameters (interest rate, capital--output ratio, productivity and labor growth, and behavioral elasticities) to cycle amplitude and period.

Methodologically, the paper blends classical bifurcation theory (Hopf and center-manifold reductions) with modern geometric perturbation techniques (Fenichel theory and phase--amplitude projection). This combination yields both qualitative insight---why Minskyan debt dynamics can select and stabilize cycles---and quantitative tools that can be used to map model parameters to observable cycle features. The results thus highlight how financial conditions interact with, and help structure, business‑cycle regularities.

Economically, our findings clarify two important points. (i) Endogenous cycles in Keen--type models are not merely numerical curiosities: under economically plausible parameter values they arise through a robust local bifurcation mechanism and are structurally stable for small financial frictions. (ii) Financial parameters---especially the interest rate and the sensitivity of investment to profit share---play a decisive role in selecting whether the economy converges to the good equilibrium or to sustained oscillations, offering a clear channel through which monetary and credit conditions shape macroeconomic stability.

Below we summarize the roadmap and highlight where readers will find the main technical and applied results: Section~\ref{sec2} formulates the model and equilibrium conditions; Sections~\ref{sec3}--\ref{sec4} construct and analyze the center manifold at zero interest; Section~\ref{sec5} proves the Hopf bifurcation and computes the leading nonlinear coefficients; Section~\ref{sec6} establishes persistence and normal hyperbolicity for small positive interest rates; Section~\ref{sec7} develops the phase--amplitude reduction and explicit amplitude/frequency corrections; Section~\ref{sec9} presents numerical illustrations; the conclusion discusses policy implications and avenues for empirical calibration.

\section{The Model}\label{sec2}
The dynamical structure under consideration extends the classical Goodwin
framework by incorporating private debt accumulation in the spirit of \cite%
{Keen1995}. The model tracks the joint evolution of the wage share $\omega$,
the employment rate $\lambda$, and the private debt-to-output ratio $d$.
Time is continuous and all variables are assumed differentiable. All
structural parameters are strictly positive.

The economy consists of a homogeneous labour force, a single composite good
used for both consumption and investment, and firms operating a Leontief
technology with fixed capital-output ratio $\nu>0$. Productivity
grows at rate $\alpha>0$, the labour force at rate $\beta>0$, and capital
depreciates at rate $\delta>0$. Firms borrow at real interest rate $r>0$.

Wage dynamics follow a Phillips curve 
\begin{equation*}
\dot{\omega}=\omega \,[\Phi (\lambda )-\alpha ],
\end{equation*}%
where $\Phi \in C^{1}(0,1)$ satisfies $\Phi ^{\prime }(\lambda )>0$. Profit
share is defined as 
\begin{equation*}
\pi =1-\omega -rd,
\end{equation*}%
and investment is governed by a function $\kappa \in C^{1}(\mathbb{R})$ with 
$\kappa ^{\prime }(\pi )>0$. With fixed capital-output ratio $\nu $, output growth is 
\begin{equation*}
g=\frac{\kappa (\pi )}{\nu }-\delta .
\end{equation*}%
Employment evolves according to 
\begin{equation*}
\dot{\lambda}=\lambda \left[ \frac{\kappa (\pi )}{\nu }-\alpha -\beta
-\delta \right] .
\end{equation*}%
Debt dynamics follow from interest payments and the gap between investment
and retained profits: 
\begin{equation*}
\dot{d}=d\left[ r-\frac{\kappa (\pi )}{\nu }+\delta \right] +\kappa (\pi
)-(1-\omega ).
\end{equation*}

Collecting terms yields the $3$-dimensional Keen system 
\begin{equation}
\begin{cases}
\dot{\omega}=\omega \,[\Phi (\lambda )-\alpha ], \\[4pt] 
\dot{\lambda}=\lambda \left[ \frac{\kappa (\pi )}{\nu }-\alpha -\beta
-\delta \right] , \\[4pt] 
\dot{d}=d\left[ r-\frac{\kappa (\pi )}{\nu }+\delta \right] +\kappa (\pi
)-(1-\omega ).%
\end{cases}
\label{eq:keen-system}
\end{equation}%
Throughout the analysis we assume $\Phi ^{\prime }(\cdot )>0$ and $\kappa
^{\prime }(\cdot )>0$, ensuring economically meaningful responses and
allowing for a precise local stability and bifurcation analysis.

Section 5 of \cite{CostaLima2013}, see also \cite{GrasselliCostaLima2012}, shows that \eqref{eq:keen-system} admits
several equilibrium configurations and study their stability, we recall some
of these preliminar results.

\subsection{Interior equilibrium$\allowbreak $}
The system admits a positive \emph{interior equilibrium} $(\omega
_{0},\lambda _{0},d_{0})$ characterized by 
\begin{align}
\lambda _{0}& =\Phi ^{-1}(\alpha ),  \label{eq:good-eq} \\
\omega _{0}& =1-\pi _{0}-rd_{0},  \notag \\
d_{0}& =\frac{\nu (\alpha +\beta +\delta )-\pi _{0}}{\alpha +\beta },  \notag
\end{align}%
where $\pi _{0}$ is the unique solution of 
\begin{equation*}
\pi _{0}=\kappa ^{-1}\!\left( \nu \left[ \alpha +\beta +\delta \right]
\right) .
\end{equation*}%
We denote this positive equilibrium by $(\omega _{0},\lambda _{0},d_{0})$,
which coincides with the equilibrium referred to as the \textquotedblleft
good\textquotedblright\ equilibrium in \cite{CostaLima2013}.

\begin{proposition}
\label{prop:Jacobian_good}The Jacobian matrix evaluated at the interior
equilibrium $(\omega _{0},\lambda _{0},d_{0})$ takes the form 
\begin{equation}
J(\omega _{0},\lambda _{0},d_{0})=%
\begin{pmatrix}
0 & K_{0} & 0 \\ 
-K_{1} & 0 & -rK_{1} \\ 
K_{2} & 0 & rK_{2}-(\alpha +\beta )%
\end{pmatrix}%
,  \label{eq:J_good}
\end{equation}%
where 
\begin{equation}
K_{0}=\omega _{0}\Phi ^{\prime }(\lambda _{0})>0,\qquad K_{1}=\frac{\lambda
_{0}\kappa ^{\prime }(\pi _{0})}{\nu }>0,\qquad K_{2}=\frac{(d_{0}-\nu
)\kappa ^{\prime }(\pi _{0})+\nu }{\nu }.  \label{eq:K_defs}
\end{equation}%
The characteristic polynomial of \eqref{eq:J_good} is 
\begin{equation}
p_{3}(y)=y^{3}+\big[(\alpha +\beta )-rK_{2}\big]y^{2}+K_{0}K_{1}%
\,y+K_{0}K_{1}(\alpha +\beta ).  \label{eq:charpoly_good}
\end{equation}%
In particular, since $K_{0}K_{1}>0$ and $\alpha +\beta >0$, all eigenvalues
have negative real parts if and only if $rK_{2}<0.$ Therefore, the good
equilibrium is locally asymptotically stable if and only if $rK_{2}<0$, and
it loses stability when $rK_{2}>0$.
\end{proposition}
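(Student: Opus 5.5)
The proof is a direct computation of the Jacobian followed by a Routh--Hurwitz argument. Write $\pi=1-\omega-rd$, so that $\partial\pi/\partial\omega=-1$ and $\partial\pi/\partial d=-r$. At $(\omega_0,\lambda_0,d_0)$ I will use three equilibrium relations from \eqref{eq:good-eq}: $\Phi(\lambda_0)=\alpha$, $\kappa(\pi_0)/\nu=\alpha+\beta+\delta$, and hence $r-\kappa(\pi_0)/\nu+\delta=r-(\alpha+\beta)$.

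\textbf{Step 1: the first two rows.} Differentiating $\dot\omega$ gives $\Phi(\lambda_0)-\alpha=0$ in the $\omega$ slot and $\omega_0\Phi'(\lambda_0)=K_0$ in the $\lambda$ slot. There is no $d$-dependence. For $\dot\lambda$, the bracket vanishes at equilibrium, so only terms where the derivative falls on $\kappa(\pi)$ survive. These give $-\lambda_0\kappa'(\pi_0)/\nu=-K_1$ in the $\omega$ slot and $-rK_1$ in the $d$ slot.

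\textbf{Step 2: the third row.} For $\dot d$, the $\omega$-derivative is
\[
\frac{d_0\kappa'(\pi_0)}{\nu}-\kappa'(\pi_0)+1=\frac{(d_0-\nu)\kappa'(\pi_0)+\nu}{\nu}=K_2 .
\]
There is no $\lambda$-dependence. The $d$-derivative is
\[
r-(\alpha+\beta)+r\Big(\frac{d_0\kappa'(\pi_0)}{\nu}-\kappa'(\pi_0)\Big)=r-(\alpha+\beta)+r(K_2-1)=rK_2-(\alpha+\beta).
\]
This yields \eqref{eq:J_good}. The signs $K_0,K_1>0$ follow from $\omega_0,\lambda_0>0$ together with $\Phi',\kappa'>0$.

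\textbf{Step 3: the characteristic polynomial.} I will expand $\det(yI-J)$ along the first row. This gives
\[
y\,\big[y\,(y-rK_2+\alpha+\beta)\big]+K_0\big[K_1\big(y-rK_2+\alpha+\beta\big)+rK_1K_2\big].
\]
The $rK_1K_2$ terms cancel, which leaves \eqref{eq:charpoly_good}. So the coefficients are $a_2=(\alpha+\beta)-rK_2$, $a_1=K_0K_1$ and $a_0=K_0K_1(\alpha+\beta)$.

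\textbf{Step 4: Routh--Hurwitz.} For a monic cubic, all roots lie in the open left half-plane if and only if $a_2>0$, $a_0>0$ and $a_2a_1>a_0$. Here $a_0>0$ holds automatically. The third condition reads $K_0K_1[(\alpha+\beta)-rK_2]>K_0K_1(\alpha+\beta)$, which is equivalent to $rK_2<0$. That inequality in turn forces $a_2>\alpha+\beta>0$. Hence the eigenvalues are stable exactly when $rK_2<0$, and linearization gives local asymptotic stability.

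\textbf{Main obstacle: the converse direction.} The delicate point is showing that $rK_2>0$ produces genuine instability, not merely failure of the Hurwitz criterion. Since the roots depend continuously on the coefficients, I will track them as $a_2$ varies with $a_0,a_1$ fixed. The real root stays negative because $p_3(y)>0$ for $y\ge0$ when $a_2\ge0$, and when $a_2<0$ one still has $p_3(0)>0$.

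At $a_2a_1=a_0$ the cubic factors as $(y+a_2)(y^2+a_1)$, giving a purely imaginary pair $\pm i\sqrt{K_0K_1}$. This pair crosses the axis transversally. For $a_2a_1<a_0$ it has positive real part: the sum of the complex pair's real parts is $-a_2$ plus the absolute value of the real root, and one checks it is positive. Alternatively, the Routh table has two sign changes when $a_1>0$ and $a_2a_1<a_0$. Either way there are two roots with positive real part, so the equilibrium is unstable for $rK_2>0$. The boundary case $rK_2=0$ is the nonhyperbolic center configuration studied in the later sections.
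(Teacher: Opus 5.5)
Your proposal is correct. Its Jacobian, characteristic polynomial and Hurwitz step match the paper's route, but you handle the converse direction differently.

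Your Jacobian entries are right, and so is the cofactor expansion giving \eqref{eq:charpoly_good}, including the cancellation of the $rK_1K_2$ terms. The reduction of the Hurwitz condition $a_2a_1>a_0$ to $rK_2<0$ is also correct. The paper states this proposition without proof, citing \cite{CostaLima2013}, and performs the same Routh--Hurwitz step in the proof of Proposition~\ref{prop:Hopf-good}.

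The difference is the converse. The paper gets instability only for small $\eta=rK_2>0$, from the transversality value $\mathrm{Re}\,(d\rho/d\eta)|_{\eta=0}=\Omega_r/(2(\Omega_r+\gamma^2))>0$ plus continuity of the spectrum. Your Routh count uses the first column $1,\,a_2,\,(a_2a_1-a_0)/a_2,\,a_0$. It has two sign changes whenever $a_2a_1<a_0$, using the usual $\epsilon$-device at $a_2=0$, and no roots lie on the imaginary axis. So it places exactly two roots in the open right half-plane for \emph{every} $rK_2>0$, which is what ``loses stability when $rK_2>0$'' actually requires. It also shows that for large $rK_2$ the unstable pair becomes real. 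Hence the description in Proposition~\ref{prop:Hopf-good} of a complex pair for all $\eta>0$ is valid only near $\eta=0$.

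If you use the Vieta version instead, make the check you left implicit explicit. Since $p_3(0)=a_0>0$ and $p_3(-a_2)=a_0-a_1a_2>0$, the smallest real root $-\rho$ satisfies $\rho>\max(0,a_2)$. The other two roots then have sum $\rho-a_2>0$ and product $a_0/\rho>0$. You need the product because those two roots can be real. In that case the cubic has three real roots, and ``the real root'' in your sketch is ambiguous.

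Neither your argument nor the paper's settles the ``only if'' for \emph{asymptotic} stability at $rK_2=0$, where linearization is silent. Calling this case the center configuration of the later sections is accurate only for $r=0$, where the first integral gives neutral cycles. For $r>0$, $K_2=0$ it is the Hopf point, and its nonlinear stability is decided by the first Lyapunov coefficient.
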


\subsection{Boundary equilibrium}

Section~5.2 of \cite{CostaLima2013} shows that \eqref{eq:keen-system} admits
a \emph{boundary equilibrium} of the form 
\begin{equation*}
(\omega ,\lambda ,d)=(0,0,d_{1}),
\end{equation*}%
where $d_{1}$ solves 
\begin{equation}
d_{1}\left[ r-\frac{\kappa (1-rd_{1})}{\nu }+\delta \right] +\kappa
(1-rd_{1})-1=0.  \label{eq:bad-eq}
\end{equation}%
This configuration represents collapse of wages and employment at a finite
debt level and is generically of saddle type \cite[Section~5.3]%
{CostaLima2013}. $\allowbreak $

\begin{proposition}
The boundary equilibrium $(0,0,d_{1})$ of the system \eqref{eq:keen-system}
has eigenvalues 
\begin{equation*}
\lambda _{1}=\Phi (0)-\alpha ,\qquad \lambda _{2}=g(\pi _{0})-(\alpha +\beta
+\delta ),\qquad \lambda _{3}=(r+\delta )-g(\pi _{0}),
\end{equation*}%
where $g(\pi )=\kappa (\pi )/\nu -\delta $. In particular, the boundary
equilibrium is locally asymptotically stable if and only if 
\begin{equation*}
\Phi (0)<\alpha \quad \text{and}\quad \alpha +\beta +\delta >g(\pi
_{0})>r+\delta .
\end{equation*}%
Otherwise, it is unstable.
\end{proposition}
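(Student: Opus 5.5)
The plan is a direct linearization at $(0,0,d_{1})$, exploiting the fact that the $\omega$- and $\lambda$-equations in \eqref{eq:keen-system} are each multiplied by their own state variable. Write $\pi_{1}=1-rd_{1}$ for the profit share at the boundary equilibrium. I read the symbol $g(\pi_{0})$ in the statement as $g$ evaluated at this equilibrium profit share, since $\pi_{0}$ itself belongs to the interior equilibrium.

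\textbf{Step 1 (the $\omega$- and $\lambda$-rows).} Differentiate $\dot\omega=\omega[\Phi(\lambda)-\alpha]$ and evaluate at $\omega=\lambda=0$. The row is $(\Phi(0)-\alpha,0,0)$, because every other partial derivative carries a factor $\omega=0$. Likewise $\dot\lambda=\lambda[\kappa(\pi)/\nu-\alpha-\beta-\delta]$ gives the row $(0,\ \kappa(\pi_{1})/\nu-\alpha-\beta-\delta,\ 0)$. The entries $\partial_\omega\pi=-1$ and $\partial_d\pi=-r$ appear only multiplied by $\lambda=0$.

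\textbf{Step 2 (the $d$-row and the triangular structure).} The $d$-row has $\partial_\lambda\dot d=0$, since $\lambda$ does not enter the $d$-equation. Its $\omega$-entry is irrelevant for the spectrum, because the Jacobian is lower triangular. The eigenvalues are therefore the three diagonal entries. The last of these is
\[
\partial_d\dot d\big|_{(0,0,d_1)}=r-\frac{\kappa(\pi_1)}{\nu}+\delta+r\kappa'(\pi_1)\Big(1-\frac{d_1}{\nu}\Big),
\]
using $\partial_d\pi=-r$ and \eqref{eq:bad-eq} only to fix $\pi_1$.

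\textbf{Step 3 (matching the stated form).} Here I expect the only real obstacle: reconciling the computed diagonal with the displayed $\lambda_2,\lambda_3$, which is a matter of bookkeeping with $g=\kappa/\nu-\delta$. The computed entries are
\begin{itemize}
\item $\lambda_2=g(\pi_1)-(\alpha+\beta)$, whereas the statement writes $g(\pi_1)-(\alpha+\beta+\delta)$;
\item $\lambda_3=(r+\delta)-g(\pi_1)-\delta$ plus the term $r\kappa'(\pi_1)(1-d_1/\nu)$.
\end{itemize}
So the statement's $\lambda_2,\lambda_3$ agree with the Jacobian only under the alternative convention $g=\kappa/\nu$ (without $-\delta$), and only when the feedback term $r\kappa'(\pi_1)(\nu-d_1)/\nu$ is neglected. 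I would record this explicitly: either adopt that convention and hypothesis, following \cite[Section~5.3]{CostaLima2013}, or state $\lambda_2,\lambda_3$ in the exact form above.

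\textbf{Step 4 (stability).} Once the eigenvalues are identified as real diagonal entries, local asymptotic stability holds if and only if all three are negative, and instability follows if any is positive. Under the statement's labels these three sign conditions read $\Phi(0)<\alpha$ and $r+\delta<g<\alpha+\beta+\delta$, which is the claimed characterization. The borderline case where some eigenvalue vanishes is nonhyperbolic; I would exclude it as nongeneric or list it separately, since linearization alone does not decide it.
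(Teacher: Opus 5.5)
There is no proof in the paper to compare against: the proposition is quoted from \cite{CostaLima2013}. Direct linearization is the natural argument, and your structural observations are correct. Rows one and two are diagonal because of the factors $\omega$ and $\lambda$, $\partial_\lambda\dot d=0$, the Jacobian is lower triangular, and its spectrum is the diagonal. Your bookkeeping in Step~3 is also right. With $g=\kappa/\nu-\delta$, the displayed $\lambda_2$ and $\lambda_3$ are each off by $\delta$. The symbol $g(\pi_0)$ only makes sense as $g(\pi_1)$ with $\pi_1=1-rd_1$, since literally $g(\pi_0)=\alpha+\beta$. The $r\kappa'$ feedback term is absent from the statement. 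So the proposition cannot be proved as written. In Step~4 you should say plainly that you are proving a corrected version, not ``the claimed characterization.''

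However, your third diagonal entry has a sign error. Since $\partial_d\pi=-r$,
\[
\partial_d\Big[-\frac{d\,\kappa(\pi)}{\nu}\Big]=-\frac{\kappa(\pi)}{\nu}+\frac{r\,d\,\kappa'(\pi)}{\nu},\qquad \partial_d\,\kappa(\pi)=-r\,\kappa'(\pi),
\]
and therefore
\[
\partial_d\dot d\,\big|_{(0,0,d_1)}=r+\delta-\frac{\kappa(\pi_1)}{\nu}+\frac{r\,\kappa'(\pi_1)\,(d_1-\nu)}{\nu}.
\]
So the feedback term is $-r\kappa'(\pi_1)(1-d_1/\nu)$, not $+r\kappa'(\pi_1)(1-d_1/\nu)$. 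You can check this against the paper. The same computation at the interior equilibrium, where $\kappa(\pi_0)/\nu=\alpha+\beta+\delta$, must reproduce the entry $rK_2-(\alpha+\beta)$ with $K_2=((d_0-\nu)\kappa'(\pi_0)+\nu)/\nu$. It does so only with the sign above. Equivalently, $J_{31}=\tilde K_2$ and $J_{33}=\delta-\kappa(\pi_1)/\nu+r\tilde K_2$, where $\tilde K_2$ is $K_2$ evaluated at $(\pi_1,d_1)$.

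The sign matters for the exact statement you propose to record. Local asymptotic stability holds if and only if $\Phi(0)<\alpha$, $\kappa(\pi_1)/\nu<\alpha+\beta+\delta$, and
\[
\frac{\kappa(\pi_1)}{\nu}>r+\delta+\frac{r\,\kappa'(\pi_1)\,(d_1-\nu)}{\nu}.
\]
Thus high leverage, $d_1>\nu$, is destabilizing, whereas your formula would make it stabilizing. With this correction, together with your separate treatment of the nonhyperbolic borderline cases, the argument is complete.
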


\subsection{Infinite-debt equilibrium}
In \cite{CostaLima2013} they analyze anothe configuration, which we call the 
\emph{infinite-debt equilibrium}, where $d\rightarrow +\infty $. To analyze
this configuration we introduce the change of variables $u=\frac{1}{d}$. In
the transformed coordinates $(\omega ,\lambda ,u)$, the infinite-debt state $%
(0,0,+\infty )$ corresponds to the finite point $(0,0,0)$. The following
results are proved in Section~5.3 of \cite{CostaLima2013}.

\begin{proposition}
Let $u:=1/d$ and consider the transformed system \eqref{eq:keen-system} in
variables $(\omega ,\lambda ,u)$. Then $(\omega _{2},\lambda
_{2},d_{2})=(0,0,+\infty )$ in the original system corresponds to $(\omega
_{2},\lambda _{2},u_{2})=(0,0,0)$ in the transformed one. Assume that 
\begin{equation*}
\Phi (0)<\alpha \qquad \text{and}\qquad \kappa (-\infty )<\nu (\alpha +\beta
+\delta ).
\end{equation*}%
Then $(0,0,0)$ is locally asymptotically stable if and only if 
\begin{equation*}
\mu (-\infty )<r,\qquad \text{where}\qquad \mu (-\infty ):=\frac{\kappa
(-\infty )}{\nu }-\delta .
\end{equation*}%
Equivalently, stability holds if and only if 
\begin{equation*}
\frac{\kappa (-\infty )}{\nu }-\delta <r.
\end{equation*}
\end{proposition}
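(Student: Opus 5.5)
The plan is to rewrite \eqref{eq:keen-system} in the variables $(\omega,\lambda,u)$ with $u=1/d$, linearize at $(0,0,0)$, and read off the eigenvalues. Since $\dot u=-\dot d/d^{2}=-u^{2}\dot d$, and using $\pi=1-\omega-r/u$, the transformed system is
\begin{equation*}
\dot\omega=\omega[\Phi(\lambda)-\alpha],\qquad
\dot\lambda=\lambda\Big[\frac{\kappa(1-\omega-r/u)}{\nu}-\alpha-\beta-\delta\Big],
\end{equation*}
\begin{equation*}
\dot u=u\Big[\frac{\kappa(1-\omega-r/u)}{\nu}-\delta-r\Big]-u^{2}\big[\kappa(1-\omega-r/u)-(1-\omega)\big].
\end{equation*}
As $u\to0^{+}$ we have $\pi\to-\infty$. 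So, as in \cite{CostaLima2013}, the vector field is extended to $u=0$ by replacing $\kappa(\pi)$ with its limit $\kappa(-\infty)$, which is assumed finite. I will also assume $\kappa$ approaches this limit fast enough that the extended field is $C^{1}$ at the origin. A sufficient condition is $u^{-1}\kappa'(1-\omega-r/u)\to0$, for instance $\kappa'(\pi)=o(1/|\pi|)$. This regularity step is the main obstacle: without it the linearization argument is only formal. The fallback is to state the result in the sense of \cite{CostaLima2013}, where the limiting function is treated as the relevant coefficient.

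With the extension in place, the Jacobian at $(0,0,0)$ is lower triangular. The reason is that each equation is its own variable times a bracket, plus an $O(u^{2})$ term in the $\dot u$ equation. Hence $\partial\dot\omega/\partial\lambda=\partial\dot\omega/\partial u=0$ at $\omega=0$, $\partial\dot\lambda/\partial u=0$ at $\lambda=0$, and $u^{2}[\cdots]$ contributes nothing at first order provided $\kappa$ is bounded. The eigenvalues are therefore the diagonal entries:
\begin{equation*}
\Phi(0)-\alpha,\qquad \frac{\kappa(-\infty)}{\nu}-\alpha-\beta-\delta,\qquad \frac{\kappa(-\infty)}{\nu}-\delta-r=\mu(-\infty)-r.
\end{equation*}

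Under the hypotheses $\Phi(0)<\alpha$ and $\kappa(-\infty)<\nu(\alpha+\beta+\delta)$, the first two eigenvalues are negative. If $\mu(-\infty)<r$, all three are negative, and local asymptotic stability follows from the linearization theorem. Conversely, if $\mu(-\infty)>r$, there is a positive eigenvalue, so the origin is unstable by Lyapunov's instability theorem.

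The borderline case $\mu(-\infty)=r$ gives a zero eigenvalue, and the statement's ``if and only if'' needs care there. I would handle it with a center-manifold argument. Along the $u$-axis, with $\omega=\lambda=0$, the dynamics is $\dot u=u[\kappa(-r/u)/\nu-\kappa(-\infty)/\nu]-u^{2}[\kappa(-r/u)-1]$. Monotonicity of $\kappa$ gives $\kappa(-r/u)>\kappa(-\infty)$, so the first term is positive for small $u$. I would then need to show the positive term dominates, or else restrict the claim to the hyperbolic cases and note the degenerate case separately. Finally, I will translate back to the original variables: $u\to0^{+}$ corresponds to $d\to+\infty$, which gives the equivalent form $\kappa(-\infty)/\nu-\delta<r$.
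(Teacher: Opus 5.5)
The paper gives no proof of its own; it defers to \cite{CostaLima2013}. The intended argument is exactly yours: pass to $u=1/d$, extend by $\kappa(-\infty)$, and read the eigenvalues $\Phi(0)-\alpha$, $\kappa(-\infty)/\nu-\alpha-\beta-\delta$ and $\mu(-\infty)-r$ off the diagonal Jacobian at $(0,0,0)$. Your treatment of the two hyperbolic cases is correct.

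The genuine gap is the borderline case $\mu(-\infty)=r$. The ``only if'' direction needs it, and you leave it open; in fact no domination estimate is required. On the invariant line $\omega=\lambda=0$ (where $\pi=1-r/u$, not $-r/u$),
\[
\dot u=\frac{u}{\nu}\bigl[\kappa(1-r/u)-\kappa(-\infty)\bigr]+u^{2}\bigl[1-\kappa(1-r/u)\bigr].
\]
The first term is positive by strict monotonicity of $\kappa$. The second is positive for small $u$ as soon as $\kappa(-\infty)<1$; if $\kappa(-\infty)=1$ the sum equals $[\kappa(1-r/u)-\kappa(-\infty)](u/\nu-u^{2})>0$. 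Hence $\dot u>0$ on this line near the origin, which is therefore unstable.

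The condition $\kappa(-\infty)\le 1$ is automatic under the standing hypothesis $\kappa(-\infty)<\nu(\alpha+\beta+\delta)$ whenever $\nu(\alpha+\beta+\delta)\le 1$; this quantity equals $0.165$ in the paper's calibration. It cannot be dropped, however. Take $\kappa(\pi)=\kappa(-\infty)+e^{\pi}$ with $\kappa(-\infty)=1.1$, $\nu=10$, $\delta=0.05$, $r=0.06$, $\alpha+\beta=0.1$ and $\Phi(0)<\alpha$. All hypotheses hold with $\mu(-\infty)=r$, and the first term is $o(u^{2})$. So $\dot u\le -c\,u^{2}$ near the origin in $\{u\ge 0\}$, and since the $\omega$- and $\lambda$-directions contract, the origin is asymptotically stable there. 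The equivalence must therefore be read with $\kappa(-\infty)\le 1$ added, or as a statement about hyperbolic linear stability. With that reading, the line argument above completes your proof.

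A smaller correction concerns regularity, which you call the main obstacle. Your sufficient condition $\kappa'(\pi)=o(1/|\pi|)$ does not give a $C^{1}$ extension. It makes $\partial_u\dot u$ continuous, but $\partial_u\dot\lambda=r\lambda\kappa'(\pi)/(\nu u^{2})$ is continuous at the origin only if $\pi^{2}\kappa'(\pi)$ stays bounded.

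Fortunately $C^{1}$ is not needed. Each component carries its own variable as a factor, so on the forward-invariant half-space $u\ge 0$ one has $f(x)=Ax+R(x)$ with
\[
R=\Bigl(\omega\bigl[\Phi(\lambda)-\Phi(0)\bigr],\ \frac{\lambda}{\nu}\bigl[\kappa(\pi)-\kappa(-\infty)\bigr],\ \frac{u}{\nu}\bigl[\kappa(\pi)-\kappa(-\infty)\bigr]-u^{2}\bigl[\kappa(\pi)-(1-\omega)\bigr]\Bigr).
\]
Then $R=o(|x|)$ using only that $\kappa(1-\omega-r/u)\to\kappa(-\infty)$ uniformly as $(\omega,u)\to(0,0^{+})$, which monotonicity gives. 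Thus $V=\omega^{2}+\lambda^{2}+u^{2}$ is a strict Lyapunov function when $\mu(-\infty)<r$. For $\mu(-\infty)>r$, instability is again visible on the invariant line $\omega=\lambda=0$. Stability here is necessarily relative to $u\ge 0$, since for $u<0$ the limit of $\kappa(\pi)$ is $\kappa(+\infty)$.
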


\section{Center manifold with $r=0$}

\label{sec3}

The interior (good) equilibrium constitutes the economically relevant steady
state of the system. Its local stability depends on the interaction between
wage dynamics, employment, and debt feedback, and may vary with parameter
values. A detailed spectral analysis of this equilibrium was carried out in 
\cite[Section~5.3]{CostaLima2013}. Here we build upon those previous results
to examine its Hopf bifurcation and other structures.

In this section we analyze the dynamics of system \eqref{eq:keen-system}
near the interior equilibrium under the assumption of zero interest rate, $%
r=0$. Under this restriction, the system reduces to 
\begin{equation}
\begin{aligned} \dot{\omega} &= \omega\,\big[\Phi(\lambda)-\alpha\big],
\\[4pt] \dot{\lambda} &= \lambda\left( \frac{\kappa(1-\omega)}{\nu} - \alpha
- \beta - \delta \right), \\[4pt] \dot{d} &= d\left(
-\,\frac{\kappa(1-\omega)}{\nu} + \delta \right) + \kappa(1-\omega) -
(1-\omega). \end{aligned}  \label{eq:system-r0}
\end{equation}%
We now determine the equilibrium of \eqref{eq:system-r0} wich corresponds to
the interiour equilibrium (\ref{eq:good-eq}). From the Proposition \ref{prop:Jacobian_good}, for $r=0$ the Jacobian matrix at the origin has a
block--triangular structure that takes the form 
\begin{equation}
J(\omega _{0},\lambda _{0},d_{0})=%
\begin{pmatrix}
0 & K_{0} & 0 \\ 
-K_{1} & 0 & 0 \\ 
K_{2} & 0 & -(\alpha +\beta )%
\end{pmatrix}%
,
\end{equation}%
where the $K$'s are defined in (\ref{eq:K_defs}) with $\pi _{0}=1-\omega
_{0} $.

The linearized sistem $\left( \omega ,\lambda \right) $ at $\left( \omega
_{0},\lambda _{0}\right) $ is decoupled with eigenvalues given by $\lambda
_{1,2}=\pm i\,\Omega _{0},$ where 
\begin{equation*}
\Omega _{0}^{2}=K_{1}K_{0}=\frac{\omega _{0}\lambda _{0}}{\nu }\,\Phi
^{\prime }(\lambda _{0})\,\kappa ^{\prime }(\pi _{0}).
\end{equation*}%
Since $\Phi ^{\prime }>0$ and $\kappa ^{\prime }>0$, this block generates
neutral, cycle-like dynamics characteristic of Goodwin-type wage-employment
interactions. The remaining eigenvalue is associated with the $d$-direction.
The linearized equation for $d$ takes the form 
\begin{equation*}
\dot{d}=K_{2}\omega -(\alpha +\beta )d.
\end{equation*}%
Under the generic condition $\alpha +\beta >0$, the equilibrium is
hyperbolic in the $d$-direction, with a two--dimensional center subspace
spanned by the eigenvalues $\pm i\Omega _{0}$ in the $(\omega ,\lambda )$%
-directions.

In the next theorem we apply the center manifold theorem.

\begin{theorem}
For system~\eqref{eq:system-r0} translated so that $(\omega _{0},\lambda
_{0},d_{0})$ is at the origin, assume $\alpha +\beta >0$. Then there exists
a local two-dimensional $C^{2}$ center manifold%
\begin{equation*}
W^{c}=\bigl\{(\omega ,\lambda ,d)\in \mathbb{R}^{3}:d=h(\omega ,\lambda )%
\bigr\},
\end{equation*}%
with $h(0,0)=0$, whose linearization at the origin is%
\begin{equation*}
d=h_{10}\,\omega +h_{01}\,\lambda +\mathcal{O}\bigl((\omega ,\lambda )^{2}%
\bigr),
\end{equation*}%
where%
\begin{equation*}
h_{10}=\frac{-(\alpha +\beta )K_{2}}{(\alpha +\beta )^{2}+\Omega _{0}^{2}}%
,\qquad h_{01}=\frac{\,\omega _{0}\Phi ^{\prime }(\lambda _{0})K_{2}}{%
(\alpha +\beta )^{2}+\Omega _{0}^{2}},\qquad \Omega _{0}^{2}=\frac{\omega
_{0}\lambda _{0}}{\nu }\,\Phi ^{\prime }(\lambda _{0})\kappa ^{\prime }(\pi
_{0}).
\end{equation*}

In particular, if $K_{2}\neq 0$ then $W^{c}$ is not tangent to the $(\omega
,\lambda )$--plane.
\end{theorem}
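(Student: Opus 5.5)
The plan has three steps: verify the hypotheses of the center manifold theorem, compute the tangent plane of $W^{c}$ from the linear invariance equation, and read off the non-tangency claim. The step I expect to be the main obstacle is the sign in the tangency computation. Done carefully from the Jacobian as printed, it gives the opposite sign to the statement, so the statement as written does not follow from that Jacobian without a sign change that is not yet explained.

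\textbf{Step 1: hypotheses of the center manifold theorem.} Translate $(\omega_0,\lambda_0,d_0)$ to the origin and write \eqref{eq:system-r0} as $\dot x = Jx + N(x)$, with $J$ the block-triangular matrix displayed above and $N=\mathcal{O}(|x|^{2})$. Since $\Phi,\kappa\in C^{1}$, I will assume enough smoothness (at least $\Phi,\kappa\in C^{2}$, so the field is $C^{2}$) to get a $C^{2}$ manifold. The spectrum of $J$ is $\{\pm i\Omega_0,-(\alpha+\beta)\}$, with $\Omega_0^2=K_0K_1>0$ by Proposition~\ref{prop:Jacobian_good}. Under $\alpha+\beta>0$, the center subspace $E^{c}$ is two-dimensional and the stable subspace $E^{s}$ is one-dimensional, and there is no unstable part.

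\textbf{Step 2: graph form of $W^{c}$.} $E^{s}$ is spanned by $e_3=(0,0,1)$, because the third column of $J$ is $(0,0,-(\alpha+\beta))^{T}$. So $E^{c}$ is transverse to the $d$-axis and projects isomorphically onto the $(\omega,\lambda)$-plane. The standard center manifold theorem then gives a local $C^{2}$ invariant graph $d=h(\omega,\lambda)$ with $h(0,0)=0$ and tangent plane $E^{c}$.

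\textbf{Step 3: computing the tangent plane (main obstacle).} Take $E^{c}$ as $d=h_{10}\omega+h_{01}\lambda$ and impose invariance under the linear flow. Along the center block $\dot\omega=K_0\lambda$, $\dot\lambda=-K_1\omega$, so the linear $d$-equation requires
\[
h_{10}K_0\lambda-h_{01}K_1\omega \;=\; \pm K_2\,\omega-(\alpha+\beta)\bigl(h_{10}\omega+h_{01}\lambda\bigr).
\]
Matching coefficients of $\lambda$ and $\omega$ gives a $2\times2$ linear system:
\[
K_0h_{10}=-(\alpha+\beta)h_{01}, \qquad -K_1h_{01}=\pm K_2-(\alpha+\beta)h_{10}.
\]
Substituting the first into the second, the determinant is $D:=(\alpha+\beta)^2+\Omega_0^2>0$, so the solution is unique.

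\begin{itemize}
\item With the lower sign, $-K_2$, one gets exactly the stated values $h_{10}=-(\alpha+\beta)K_2/D$ and $h_{01}=K_0K_2/D=\omega_0\Phi'(\lambda_0)K_2/D$.
\item With the upper sign, $+K_2$, which is the $(3,1)$ entry of $J$ as printed, one gets $h_{10}=(\alpha+\beta)K_2/D$ and $h_{01}=-\omega_0\Phi'(\lambda_0)K_2/D$. These are the negatives of the stated values.
\end{itemize}

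Differentiating the $\dot d$-equation of \eqref{eq:system-r0} with respect to $\omega$ at the equilibrium gives $\bigl((d_0-\nu)\kappa'(\pi_0)+\nu\bigr)/\nu=K_2$, which confirms the upper sign.

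To obtain the statement I would therefore have to find the convention behind the lower sign, for example a reversed sign convention for $d$ or $K_2$ elsewhere in the paper. Failing that, I would conclude that the stated $h_{10}$ and $h_{01}$ are off by an overall sign. This sign issue is the step that must be settled before the formulas can be claimed.

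\textbf{Non-tangency.} This part holds under either sign. If $K_2\neq0$ then $h_{01}\neq0$, because $\omega_0\Phi'(\lambda_0)>0$ and $D>0$. Hence $\nabla h(0,0)\neq0$, and $W^{c}$ is not tangent to the $(\omega,\lambda)$-plane.
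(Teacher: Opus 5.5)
Your approach matches the paper's own proof: the center manifold theorem, then the linear invariance equation with the printed entry $+K_2$. Your sign analysis is also correct, and there is no hidden convention to find, because the paper writes down exactly your upper-sign system and then reports $h_{10}=(\alpha+\beta)K_2/D$, which contradicts the statement, together with $h_{01}=K_0K_2/D$, which is a sign slip in solving that system (here $D=(\alpha+\beta)^2+\Omega_0^2$).

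The tangent plane is therefore the one you derived, $d=K_2\bigl((\alpha+\beta)\omega-\omega_0\Phi'(\lambda_0)\lambda\bigr)/D$. You can confirm it independently by taking the real and imaginary parts of the center eigenvector $\bigl(1,\,i\Omega_0/K_0,\,K_2/(\alpha+\beta+i\Omega_0)\bigr)$. The stated $h_{10},h_{01}$ carry an overall sign error, while the non-tangency conclusion is unaffected.
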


\begin{proof}
By construction, the Jacobian at the origin has eigenvalues $\lambda
_{1,2}=\pm i\Omega _{0}$ and $\lambda _{3}=-(\alpha +\beta )$, with $\Omega
_{0}>0$ and $\alpha +\beta >0$. Hence the center subspace is two-dimensional
and associated with $\pm i\Omega _{0}$, while the $d$--direction is
hyperbolic. By the Center Manifold Theorem (see, e.g., Carr~\cite{Carr1981}
or Perko~\cite{Perko2013}), there exists a local $C^{2}$ center manifold%
\begin{equation*}
W^{c}=\{(\omega ,\lambda ,d):d=h(\omega ,\lambda )\},\quad h(0,0)=0,\quad
Dh(0,0)\ \text{finite}.
\end{equation*}%
Write the Taylor expansion%
\begin{equation*}
d=h(\omega ,\lambda )=h_{10}\,\omega +h_{01}\,\lambda +\mathcal{O}\bigl(%
(\omega ,\lambda )^{2}\bigr).
\end{equation*}%
The invariance equation for $W^{c}$ is%
\begin{equation*}
\dot{d}=h_{\omega }(\omega ,\lambda )\,\dot{\omega}+h_{\lambda }(\omega
,\lambda )\,\dot{\lambda},
\end{equation*}%
where $\dot{\omega},\dot{\lambda},\dot{d}$ are given by the translated
system. To first order, using%
\begin{equation*}
\dot{\omega}=K_{0}\lambda +\mathcal{O}(2),\qquad \dot{\lambda}%
=-K_{1}\,\omega +\mathcal{O}(2),
\end{equation*}%
and%
\begin{equation*}
\dot{d}=K_{2}\omega -(\alpha +\beta )d+\mathcal{O}(2),
\end{equation*}%
we obtain, at linear order, the invariance equation%
\begin{equation*}
-(\alpha +\beta )(h_{10}\omega +h_{01}\lambda )+K_{2}\omega
=h_{10}\,K_{0}\lambda -h_{01}K_{1}\omega .
\end{equation*}%
Equating coefficients of $\omega $ and $\lambda $ gives the linear system.
Solving for $(h_{10},h_{01})$ and using $\Omega _{0}^{2}=K_{0}K_{1}$ yields%
\begin{equation*}
h_{10}=\frac{(\alpha +\beta )K_{2}}{(\alpha +\beta )^{2}+\Omega _{0}^{2}}%
,\qquad h_{01}=\frac{\,K_{0}K_{2}}{(\alpha +\beta )^{2}+\Omega _{0}^{2}}.
\end{equation*}%
If $K_{2}\neq 0$, then at least one of $h_{10},h_{01}$ is nonzero, so the
center manifold is not tangent to the $(\omega ,\lambda )$--plane. This
completes the proof.
\end{proof}

Thus, the reduced flow on the center manifold is governed, at first order,
by the classical Goodwin wage--employment interaction, while the debt ratio
adjusts endogenously as a nonlinear function of $(\omega ,\lambda )$. From
an economic perspective, the center manifold $W^{c}$ represents the local
set of wage-employment-debt configurations that remain close to the positive
equilibrium when $r=0$. On this manifold, the model generates persistent,
cycle-like fluctuations driven by wage bargaining and employment, while the
debt ratio follows these fluctuations in a nonlinear but locally slaved
manner.

\section{The integrability for $r=0$}
 \label{sec4}
As in the Goodwin model, the reduced dynamics in $(\omega ,\lambda )$ admit
a first integral at this order, in agreement with the analysis presented in 
\cite{CostaLima2013}. More precisely.

\begin{proposition}
Consider the subsystem%
\begin{equation*}
\dot{\omega}\;=\;\omega \left( \Phi (\lambda )-\alpha \right) ,\qquad \dot{%
\lambda}\;=\;\lambda \left( \frac{\kappa (1-\omega )}{\nu }-\alpha -\beta
-\delta \right) ,\quad \omega >0,\ \lambda >0.
\end{equation*}%
Define%
\begin{equation*}
F(\lambda ):=\int^{\lambda }\frac{\Phi (s)-\alpha }{s}\,ds,
\end{equation*}%
and%
\begin{equation*}
I(\omega ,\lambda ):=F(\lambda )+\left( \alpha +\beta +\delta -\frac{\kappa 
}{\nu }\right) \ln \omega +\frac{\kappa }{\nu }\,\omega .
\end{equation*}%
Then $I$ is a first integral of the system, i.e.%
\begin{equation*}
\frac{d}{dt}I\left( \omega (t),\lambda (t)\right) =0
\end{equation*}%
along every solution $(\omega (t),\lambda (t))$.
\end{proposition}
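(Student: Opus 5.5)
The plan is a direct verification by the chain rule. Following the way the integral $I$ is written, I read the symbol $\kappa$ appearing in $I$ as a constant: the proportional (linear) investment rule $\kappa(\pi)=\kappa\,\pi$ evaluated at $\pi=1-\omega$ (recall $r=0$). The employment equation then becomes
\begin{equation*}
\dot\lambda=\lambda\Bigl(\frac{\kappa}{\nu}-\frac{\kappa}{\nu}\,\omega-a\Bigr),\qquad a:=\alpha+\beta+\delta .
\end{equation*}
This is exactly the setting in which the displayed $I$ makes sense. With a general nonlinear $\kappa(\cdot)$ one would instead replace the last two terms of $I$ by a primitive of $\bigl(a-\kappa(1-s)/\nu\bigr)/s$; I will not pursue that version.

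First I will check that $I$ is $C^1$ on the open quadrant $\omega>0$, $\lambda>0$. Since $\Phi\in C^1(0,1)$, the integrand $(\Phi(s)-\alpha)/s$ is continuous on $(0,1)$. Hence $F$ is $C^1$ with
\begin{equation*}
F'(\lambda)=\frac{\Phi(\lambda)-\alpha}{\lambda},
\end{equation*}
and the unspecified lower limit only shifts $I$ by a constant. The term $\ln\omega$ is smooth for $\omega>0$. I will also note that the quadrant is invariant, because $\dot\omega$ vanishes on $\omega=0$ and $\dot\lambda$ vanishes on $\lambda=0$. Consequently $\ln\omega(t)$ is defined along every solution starting in the quadrant.

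Next I will differentiate along a solution:
\begin{equation*}
\frac{d}{dt}I=F'(\lambda)\,\dot\lambda+\Bigl(a-\frac{\kappa}{\nu}\Bigr)\frac{\dot\omega}{\omega}+\frac{\kappa}{\nu}\,\dot\omega .
\end{equation*}
Substituting the equations of motion, the common factor $\Phi(\lambda)-\alpha$ appears in all three terms: the first through $F'$, the second and third through $\dot\omega=\omega(\Phi(\lambda)-\alpha)$. Factoring it out leaves
\begin{equation*}
(\Phi(\lambda)-\alpha)\Bigl[\frac{\kappa}{\nu}-\frac{\kappa}{\nu}\omega-a+a-\frac{\kappa}{\nu}+\frac{\kappa}{\nu}\omega\Bigr]=0,
\end{equation*}
which gives the claim.

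The only delicate point is the interpretation step, not the computation. The cancellation requires the $\omega$-dependence of $\dot\lambda/\lambda$ to be exactly affine with slope $-\kappa/\nu$. That is why I will state explicitly at the outset that $\kappa$ in $I$ denotes the constant slope of a linear investment function. With that convention fixed, the verification is the single factoring identity above.
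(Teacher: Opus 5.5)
Your verification is correct and is the same chain-rule computation as the paper's proof: both factor out $\Phi(\lambda)-\alpha$ and check that the remaining bracket $\bigl(a-\frac{\kappa}{\nu}\bigr)+\frac{\kappa}{\nu}\omega+\frac{\kappa(1-\omega)}{\nu}-a$ vanishes. Your explicit convention that $\kappa$ is the constant slope of a linear investment rule is exactly what the paper's cancellation tacitly uses; for a genuinely nonlinear $\kappa(\cdot)$ the displayed $I$ is not conserved, and your replacement of the last two terms by a primitive of $\bigl(a-\kappa(1-s)/\nu\bigr)/s$ is the correct general form.
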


\begin{proof}
By definition of $F$ we have%
\begin{equation*}
\frac{\partial I}{\partial \lambda }=F^{\prime }(\lambda )=\frac{\Phi
(\lambda )-\alpha }{\lambda },
\end{equation*}%
and%
\begin{equation*}
\frac{\partial I}{\partial \omega }=\Big(\alpha +\beta +\delta -\frac{\kappa 
}{\nu }\Big)\frac{1}{\omega }+\frac{\kappa }{\nu }.
\end{equation*}%
Along a solution $(\omega (t),\lambda (t))$, the chain rule gives%
\begin{equation*}
\frac{dI}{dt}=\frac{\partial I}{\partial \omega }\,\dot{\omega}+\frac{%
\partial I}{\partial \lambda }\,\dot{\lambda}.
\end{equation*}%
Using%
\begin{equation*}
\dot{\omega}=\omega \big[\Phi (\lambda )-\alpha \big],\qquad \dot{\lambda}%
=\lambda \left( \frac{\kappa (1-\omega )}{\nu }-\alpha -\beta -\delta
\right) ,
\end{equation*}%
we obtain%
\begin{equation*}
\frac{dI}{dt}=\big(\Phi (\lambda )-\alpha \big)\left( \Big(\alpha +\beta
+\delta -\frac{\kappa }{\nu }\Big)+\frac{\kappa }{\nu }\omega +\frac{\kappa
(1-\omega )}{\nu }-(\alpha +\beta +\delta )\right) .
\end{equation*}%
The bracket simplifies to $\frac{dI}{dt}=0$, so $I$ is constant along
trajectories and is therefore a first integral of the subsystem.
\end{proof}

Now we can idenitfy the two-dimensional center manifold $W^{c}$. Let $%
(\omega _{I}(t),\lambda _{I}(t))$ denote a $T_{I}$-periodic solution of the
reduced $(\omega ,\lambda )$ dynamics on $W^{c}$, with associated angular
frequency 
\begin{equation*}
\Omega _{I}=\frac{2\pi }{T_{I}},
\end{equation*}%
wher  $I$ is an index, and we assume that $(\omega _{I},\lambda _{I})$ for $%
I=0$ corresponds to the equilibrium $(\omega _{0},\lambda _{0})$. In fact,
it can be seen that $\Omega _{I}\rightarrow \Omega _{0}$ as $I\rightarrow 0$%
. 

Along this periodic orbit, the debt equation becomes a linear differential
equation with $T$-periodic coefficients: 
\begin{equation*}
\dot{d}=a_{I}(t)\,d+b_{I}(t),
\end{equation*}%
where 
\begin{equation*}
a_{I}(t)=-\frac{\kappa (1-\omega _{I}(t))}{\nu }+\delta ,\qquad
b_{I}(t)=\kappa (1-\omega _{I}(t))-(1-\omega _{I}(t)).
\end{equation*}%
Define the integrating factor 
\begin{equation*}
A_{I}(t)=\int_{t_{0}}^{t}a_{I}(\tau )\,d\tau ,\qquad M_{I}=\exp \!\left(
\int_{t_{0}}^{t_{0}+T}a_{I}(\tau )\,d\tau \right) ,
\end{equation*}%
where $M$ is the Floquet multiplier associated with solutions of $\dot{d}%
=a_{I}(t)d$. If $M\neq 1$, the linear equation admits a unique $T$-periodic
solution, given explicitly by 
\begin{equation*}
d_{I}(t)=e^{A_{I}(t)}\left[ \int_{t_{0}}^{t}e^{-A_{I}(s)}\,b_{I}(s)\,ds-%
\frac{1}{M_{I}-1}\int_{t_{0}}^{t_{0}+T_{I}}e^{-A_{I}(s)}\,b_{I}(s)\,ds\right]
.
\end{equation*}

\begin{conclusion}
A periodic solution of the full three-dimensional system in $W^{c}$ is
therefore obtained by first integrating the reduced subsystem for $(\omega
,\lambda )$, and subsequently solving the forced linear equation for $d$.
This yields the periodic orbit 
\begin{equation*}
(\omega _{I}(t),\lambda _{I}(t),d_{I}(t)),
\end{equation*}%
which lies on the center manifold $W^{c}$ in the case $r=0$. Moreover, the
stability of this family of periodic orbits follows from the sign of the
Floquet exponent associated with $a(t)$. Indeed, the cycle is stable if and
only if 
\begin{equation*}
\int_{0}^{T}a_{I}(\tau )\,d\tau =\int_{0}^{T}\left( -\frac{\kappa (1-\omega
_{I}(\tau ))}{\nu }+\delta \right) d\tau <0,
\end{equation*}%
equivalently $|M|<1$. In this case, perturbations in the $d$-direction
contract over one period, ensuring that the full periodic orbit in $(\omega
,\lambda ,d)$ is stable within the center manifold framework.
\end{conclusion}

The results above show that when $r=0$ the interior equilibrium is
nonhyperbolic, with a two--dimensional center manifold on which the
wage-employment subsystem behaves as in the classical Goodwin model,
generating a continuum of periodic orbits of Goodwin type. The debt variable
adjusts through a linear equation with periodic coefficients, yielding a
corresponding family of periodic solutions in the full three-dimensional
system. Thus, in the zero interest benchmark the local dynamics are
organized by a foliation of periodic trajectories, and no single cycle is
singled out by the system.

\section{Hopf bifurcation at the good equilibrium}

\label{sec5} We now consider the full system~\eqref{eq:keen-system} for $r>0$
and study the local dynamics around the interior (\textquotedblleft
good\textquotedblright ) equilibrium $(\omega _{0},\lambda _{0},d_{0})$.

\begin{proposition}[Hopf bifurcation at the good equilibrium]
\label{prop:Hopf-good} Let $\alpha +\beta >0$, $\Omega _{r}=K_{0}K_{1}>0$,
and $\eta =rK_{2}$. Then the interior equilibrium $(\omega _{0},\lambda
_{0},d_{0})$ undergoes a Hopf bifurcation at $\eta =0$. In particular, the
spectrum of the Jacobian at the equilibrium satisfies:

\begin{itemize}
\item for $\eta<0$, all eigenvalues have negative real parts;

\item for $\eta =0$, the Jacobian has a simple pair of purely imaginary
eigenvalues $\pm i\sqrt{\Omega _{r}}$ and a real negative eigenvalue $%
-\left( \alpha +\beta \right) $;

\item for $\eta >0$, the equilibrium has one real negative eigenvalue and a
complex conjugate pair with positive real part.
\end{itemize}
\end{proposition}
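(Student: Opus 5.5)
The plan is to work directly with the characteristic polynomial \eqref{eq:charpoly_good} from Proposition~\ref{prop:Jacobian_good}, viewed as a one-parameter family in $\eta=rK_2$. Write $a=\alpha+\beta>0$ and $\Omega_r=K_0K_1>0$, so that
\begin{equation*}
p_3(y)=y^3+(a-\eta)y^2+\Omega_r\,y+\Omega_r a .
\end{equation*}
At $\eta=0$ this factors as $(y+a)(y^2+\Omega_r)$. It therefore has the simple pair $\pm i\sqrt{\Omega_r}$ and the real root $-a<0$. The roots are simple because $a\neq 0$ and $\Omega_r>0$, so $-a$ cannot coincide with $\pm i\sqrt{\Omega_r}$. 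This settles the middle item.

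\emph{Case $\eta<0$.} I will apply the Routh--Hurwitz criterion for cubics $y^3+c_2y^2+c_1y+c_0$. Here $c_2=a-\eta>0$, $c_1=\Omega_r>0$, $c_0=\Omega_r a>0$. The remaining Hurwitz determinant is
\begin{equation*}
c_2c_1-c_0=(a-\eta)\Omega_r-a\Omega_r=-\eta\,\Omega_r>0 .
\end{equation*}
Hence all roots have negative real parts, which is exactly the stability statement already recorded in Proposition~\ref{prop:Jacobian_good}.

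\emph{Case $\eta>0$.} Since $c_0>0$ and the product of the roots equals $-c_0<0$, there is always a real negative root; indeed $p_3(0)>0$ and $p_3(y)\to-\infty$ as $y\to-\infty$. Factor $p_3(y)=(y+\rho)(y^2-2\sigma y+\tau)$ with $\rho>0$. Matching coefficients gives
\begin{equation*}
\rho\tau=\Omega_r a,\qquad \tau-2\sigma\rho=\Omega_r,\qquad \rho-2\sigma=a-\eta .
\end{equation*}
Because $c_2c_1-c_0=-\eta\Omega_r<0$, Routh--Hurwitz fails, so some root has nonnegative real part. Purely imaginary roots occur only when $c_2c_1=c_0$, i.e.\ only at $\eta=0$, and $y=0$ is never a root. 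Hence for $\eta>0$ no root lies on the imaginary axis. By continuity in $\eta$, the two roots that were $\pm i\sqrt{\Omega_r}$ at $\eta=0$ must enter the open right half-plane. At least for small $\eta>0$ they remain a complex conjugate pair, since they are near $\pm i\sqrt{\Omega_r}$. The real root stays near $-a$, and it stays negative for all $\eta>0$ because a root can only cross to $\operatorname{Re}y\ge0$ through the imaginary axis, which is excluded.

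\emph{Transversality.} This is the key Hopf ingredient. I will differentiate $p_3(y(\eta),\eta)=0$ implicitly at $y=i\sqrt{\Omega_r}$, $\eta=0$:
\begin{equation*}
y'(0)=\frac{y^2}{p_3'(y)}=\frac{-\Omega_r}{-2\Omega_r+2ia\sqrt{\Omega_r}}
=\frac{\Omega_r\bigl(2\Omega_r+2ia\sqrt{\Omega_r}\bigr)}{4\Omega_r^2+4a^2\Omega_r},
\end{equation*}
whose real part is $\Omega_r/\bigl(2(\Omega_r+a^2)\bigr)>0$. So the pair crosses the imaginary axis with nonzero speed, and together with the simplicity of $\pm i\sqrt{\Omega_r}$ this gives the eigenvalue conditions of the Hopf theorem at $\eta=0$.

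\emph{Main obstacle.} The delicate point is the global claim in the third item. For large $\eta$ the pair with positive real part might collide and become two positive real roots, which would contradict ``complex conjugate pair'' as literally stated. I would first try to rule this out with the discriminant of $p_3$. If that fails, I would restrict the third item to $0<\eta<\eta^*$, the first value at which the discriminant vanishes; the Hopf statement itself is local and only needs small $\eta$.
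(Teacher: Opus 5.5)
Your proposal is correct and follows the paper's proof essentially step for step: the characteristic polynomial treated as a family in $\eta$, Routh--Hurwitz with $a_1a_2-a_3=-\eta\,\Omega_r$, purely imaginary roots only at $\eta=0$, and the same transversality value $\Omega_r/\bigl(2(\Omega_r+(\alpha+\beta)^2)\bigr)$; your no-crossing count adds something the paper does not state, namely exactly one negative real eigenvalue for every $\eta>0$. On your ``main obstacle'', the discriminant cannot rescue the literal third item: for large $\eta$ the roots are asymptotically $\eta$ and $\pm\sqrt{\Omega_r(\alpha+\beta)/\eta}$, i.e.\ two positive real eigenvalues, so you should go straight to your fallback $0<\eta<\eta^*$, which is exactly the range the paper's own proof covers (``for $\eta>0$ sufficiently small'').
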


\begin{proof}
The Jacobian matrix of system~\eqref{eq:keen-system} evaluated at $(\omega
_{0},\lambda _{0},d_{0})$ is (\ref{eq:J_good}). The associated
characteristic polynomial is 
\begin{equation}
p(\rho ;\eta )=\rho ^{3}+(\gamma -\eta )\rho ^{2}+\Omega _{r}\rho
+\Omega _{r}\gamma ,  \label{eq:charpoly-mu}
\end{equation}%
with $\gamma =\alpha +\beta >0.$

To determine when the Jacobian admits a pair of purely imaginary
eigenvalues, set $\rho =i\omega $ with $\omega \neq 0$ in %
\eqref{eq:charpoly-mu} and separate real and imaginary parts. This yields 
\begin{equation*}
\begin{cases}
-\omega ^{3}+\Omega _{r}\omega =0, \\[4pt]
-(\gamma -\eta )\omega ^{2}+\Omega _{r}\gamma =0.%
\end{cases}%
\end{equation*}%
From the first equation we obtain $\omega ^{2}=\Omega _{r}$, and
substituting this into the second equation gives $\eta =0$. Hence, at $\eta =0$
the Jacobian has a simple pair of purely imaginary eigenvalues 
\begin{equation*}
\rho _{2,3}=\pm i\sqrt{\Omega _{r}},
\end{equation*}%
and the remaining eigenvalue is $\rho _{1}=-\gamma <0$.

Write $p(\rho ;\eta )=\rho ^{3}+a_{1}\rho ^{2}+a_{2}\rho +a_{3}$
with 
\begin{equation*}
a_{1}=\gamma -\eta ,\qquad a_{2}=\Omega _{r}>0,\qquad a_{3}=\Omega _{r}\gamma
>0.
\end{equation*}%
By the Routh--Hurwitz criterion for cubic polynomials, all roots satisfy $%
\mathrm{Re}(\rho )<0$ if and only if 
\begin{equation*}
a_{1}>0,\qquad a_{2}>0,\qquad a_{3}>0,\qquad a_{1}a_{2}>a_{3}.
\end{equation*}%
In the present case, the last inequality is equivalent to 
\begin{equation*}
(\gamma -\eta )\Omega _{r}>\Omega _{r}\gamma \quad \Longleftrightarrow \quad
\mu <0.
\end{equation*}%
Therefore, for $\eta <0$ sufficiently close to $0$ (so that $a_{1}=\gamma
-\eta >0$), all eigenvalues of the Jacobian have negative real parts and the
equilibrium is locally asymptotically stable. In particular, the complex
conjugate pair has negative real part for $\eta <0$.

\smallskip To verify the Hopf transversality condition, differentiate $%
p(\rho ;\eta )=0$ implicitly with respect to $\eta $ and evaluate at $%
\rho =i\sqrt{\Omega _{r}}$ and $\eta =0$. A straightforward calculation
yields 
\begin{equation*}
\text{ Re}\!\left( \frac{d\rho }{d\eta }\Big|_{\eta =0}\right) =\frac{%
\Omega _{r}}{2(\Omega _{r}+\gamma ^{2})}>0.
\end{equation*}%
Thus, the real part of the complex conjugate eigenvalues crosses the
imaginary axis with nonzero speed as $\eta $ passes through zero.

Since $\rho _{1}=-\gamma $ is a simple negative eigenvalue at $\eta =0$,
it depends continuously on $\eta $ and remains real and negative for $|\eta |$
sufficiently small. The previous transversality computation then implies
that the complex conjugate pair, which has negative real part for $\eta <0$,
acquires positive real part for $\eta >0$ sufficiently close to $0$.
Consequently, for $\eta >0$ sufficiently small the equilibrium has one
negative real eigenvalue and a complex conjugate pair with positive real
part, hence it is unstable.

Altogether, the equilibrium $(\omega_{0},\lambda_{0},d_{0})$ satisfies the
Hopf conditions at $\eta=0$, and therefore undergoes a Hopf bifurcation
there. This completes the proof.
\end{proof}

The Hopf bifurcation occurs along the locus 
\begin{equation*}
rK_{2} = 0
\end{equation*}
in parameter space. For $\eta<0$ the good equilibrium is locally
asymptotically stable, whereas for $\eta>0$ it becomes unstable.

The type of Hopf bifurcation (supercritical or subcritical) depends on the
sign of the first Lyapunov coefficient, which is determined by higher-order
nonlinear terms of the vector field. Its computation is not required for the
existence of the bifurcation and is addressed separately in the nonlinear
analysis.

From an economic perspective, the Hopf bifurcation arises from the feedback
of debt servicing costs on profitability. When the interest rate is zero,
the wage--employment subsystem generates neutral cyclical dynamics. A
positive interest rate modifies the trace of the Jacobian through the term $%
rd$ in profits. As this feedback crosses the critical condition $rK_{2}=0$,
the equilibrium loses stability and an endogenous business cycle emerges
through a Hopf bifurcation.

Introducing a positive interest rate changes this picture fundamentally.
Through the term $\pi=1-\omega-rd$, the cost of servicing debt modifies the
trace of the Jacobian at the good equilibrium, breaking the degeneracy
present at $r=0$. This perturbation affects the real part of the complex
pair of eigenvalues and can induce a stability change. In particular, the
key quantity governing this transition is 
\begin{equation*}
\eta=rK_{2},
\end{equation*}
whose sign determines whether the equilibrium behaves as a stable or an
unstable focus in the $(\omega,\lambda)$ projection. The crossing $\eta=0$
therefore identifies the natural locus for a Hopf bifurcation.

\section{Normally stable invariant manifold for small $r$}
\label{sec6}

We show that the two‑dimensional centre foliation present at $r=0$ persists, for all sufficiently small real interest rates $r$, as a nearby normally hyperbolic (indeed normally attracting) invariant manifold for the full three‑dimensional system. The statement and proof below use the notation and spectral facts established in Sections~\ref{sec3}--\ref{sec5}.

\medskip

\noindent\textbf{Local hypothesis (economically natural).}  
Assume the Phillips curve $\Phi$ and the investment function $\kappa$ are $C^{2}$ in a neighbourhood of the interior equilibrium $x_{0}=(\omega_{0},\lambda_{0},d_{0})$. Then the vector field $F(x,r)$ of \eqref{eq:keen-system} is $C^{2}$ in $(x,r)$ on a small neighbourhood $U$ of $x_{0}$.

The spectral picture and the Floquet computation used below are those established in Sections~\ref{sec3}--\ref{sec5}.

\begin{theorem}[Persistence of a normally hyperbolic invariant manifold for small $r$]
\label{thm:persistence}
Under the local hypothesis above there exist $I_{0}>0$ and $r_{0}>0$ such that for every $|I|<I_{0}$ and $|r|<r_{0}$ the full system \eqref{eq:keen-system} admits a two‑dimensional $C^{2}$ invariant manifold $W_{r}\subset U$ with the following properties:
\begin{enumerate}
  \item $W_{r}$ is $C^{1}$‑close to the unperturbed centre foliation $W^{c}$: $\mathrm{dist}_{C^{1}}(W_{r},W^{c})=O(|r|)$.
  \item $W_{r}$ is normally hyperbolic and locally attracting. 
\end{enumerate}
\end{theorem}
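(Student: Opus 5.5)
We will apply Fenichel's persistence theorem for compact normally hyperbolic invariant manifolds with boundary, in its overflowing/inflowing form, or equivalently the Hirsch--Pugh--Shub theorem. The unperturbed object is a compact piece of the center manifold $W^{c}$ of Section~\ref{sec3}.

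\textbf{Unperturbed manifold.} For $r=0$ and a fixed $0<I_{1}<I_{0}$, let
\[
M_{0}:=W^{c}\cap\{\,|I(\omega,\lambda)-I(\omega_{0},\lambda_{0})|\le I_{1}\,\},
\]
where $I$ is the first integral of Section~\ref{sec4}. Since the $(\omega,\lambda)$ subsystem does not involve $d$ when $r=0$, $M_{0}$ is a compact annulus-type disc (graph over a neighbourhood of $(\omega_{0},\lambda_{0})$) foliated by the Goodwin cycles $(\omega_{I},\lambda_{I},d_{I})$. Its boundary is itself an invariant cycle. Hence $M_{0}$ is invariant, and both its boundary and its interior are invariant.

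\textbf{Normal contraction at $r=0$.} The normal direction can be taken to be $\partial_{d}$. Since $\dot d$ is affine in $d$ with coefficient $a(\omega)=-\kappa(1-\omega)/\nu+\delta$, the normal linearization along any orbit in $M_{0}$ is $\dot\xi=a(\omega(t))\xi$. At the equilibrium, $a(\omega_{0})=-(\alpha+\beta)<0$. Shrinking $I_{1}$ therefore gives $a\le-(\alpha+\beta)/2$ on all of $M_{0}$, so normal vectors contract at rate at least $(\alpha+\beta)/2$. On the Goodwin cycles this is exactly the Floquet condition $\int_{0}^{T}a_{I}<0$ of Section~\ref{sec4}.

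Tangentially, the flow on $M_{0}$ is conjugate to the integrable Goodwin flow, whose linearization grows at most polynomially because the period varies with $I$. Its Lyapunov-type exponents are therefore $0$. The normal rate $-(\alpha+\beta)/2$ dominates the tangential rates, and the spectral gap is of ratio at least $2$. This gives $r$-normal hyperbolicity with $r=2$, consistent with the $C^{2}$ conclusion.

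\textbf{Persistence for small $r$.} The vector field $F(\cdot,r)$ is $C^{2}$ and satisfies $\|F(\cdot,r)-F(\cdot,0)\|_{C^{1}(U)}=O(|r|)$, since $r$ enters only through $\pi=1-\omega-rd$ and the term $rd$. Fenichel's theorem then yields, for $|r|<r_{0}$, a locally invariant $C^{2}$ manifold $W_{r}$, $O(|r|)$-close in $C^{1}$ to $M_{0}$, on which the normal contraction persists. This gives items (1) and (2), with local attraction following from the persistent stable fibration. Concretely, $W_{r}=\{d=h_{r}(\omega,\lambda)\}$ with $h_{r}$ obtained as the fixed point of the graph transform, and $h_{r}-h=O(r)$ in $C^{1}$.

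\textbf{Main obstacle: the boundary.} For $r\neq0$ the boundary cycle of $M_{0}$ is no longer invariant, so $W_{r}$ is only locally invariant. To handle this, I would first modify $F$ outside a slightly smaller neighbourhood by a cutoff so that $M_{0}$ becomes strictly inflowing, or use the "wormhole"/bump-function extension to a boundaryless manifold. I would then apply the compact boundaryless theorem and restrict back. Because the cutoff alters only the dynamics near $\partial M_{0}$, the resulting manifold is invariant for the true flow in the interior region $|I|<I_{0}$. This is the sense in which the statement holds, with uniqueness lost but local invariance and attraction retained.
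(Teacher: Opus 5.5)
Your proposal follows essentially the same route as the paper. You establish strict normal contraction along $\partial_d$ at $r=0$ (pointwise from $a(\omega)\le-(\alpha+\beta)/2$ near the equilibrium, where the paper uses the Floquet multipliers $M_I<1$ and continuity in $I$) and then apply Fenichel persistence under a $C^{1}$ perturbation of size $O(|r|)$. You are more careful than the paper about the tangential rate condition and the boundary, and you correctly conclude only local invariance.

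One slip to fix: for a normally attracting manifold with boundary, Fenichel's theorem needs the cutoff to make $M_0$ \emph{overflowing} invariant, not inflowing, because the forward graph transform must cover the whole base. So add a modification tangent to $M_0$ that points outward across $\partial M_0$, or use your boundaryless-extension alternative instead.
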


\begin{proof}
The proof combines the local spectral information derived in Sections~\ref{sec3}--\ref{sec5} with standard persistence results for normally hyperbolic invariant manifolds \cite{WigginsNHIM}, presented here in a single, fluid argument that avoids repeating algebraic details already given.

At $r=0$ the linearisation at the interior equilibrium has the spectral form recorded in Proposition~\ref{prop:Jacobian_good}: a simple negative normal eigenvalue and a simple purely imaginary pair for the $(\omega,\lambda)$ block. The centre foliation $W^{c}$ and the family of periodic orbits on it are constructed in Section~\ref{sec3}, and the Floquet calculation of Section~\ref{sec4} shows that along each periodic orbit the debt equation is linear with $T_{I}$‑periodic coefficients and Floquet multiplier

\[
M_{I}=\exp\!\Big(\int_{0}^{T_{I}} a_{I}(\tau)\,d\tau\Big).
\]

At the equilibrium ($I=0$) this multiplier satisfies $M_{0}=\exp(-\gamma T_{0})<1$, so the normal direction is strictly contracting there. By continuity of Floquet multipliers with respect to parameters and initial conditions, the multiplier depends continuously on the amplitude parameter $I$, hence there exists a small amplitude threshold $I_{0}>0$ such that $|M_{I}|<1$ for all $|I|<I_{0}$. Thus the family of periodic orbits of interest is uniformly normally contracting in the $d$-direction at $r=0$.

Because $\Phi$ and $\kappa$ are $C^{2}$, the vector field $F(x,r)$ depends smoothly on $r$ and the perturbation $F(\cdot,r)-F(\cdot,0)$ is $O(|r|)$ in the $C^{1}$ topology on a sufficiently small neighbourhood of $x_{0}$. Eigenvalues of the linearisation and Floquet multipliers therefore vary continuously with $(x,r)$, so the strict normal contraction observed at $r=0$ persists for sufficiently small $|r|$. Concretely, one may choose a small local patch $U$ and a positive constant $\delta$ (for instance $\delta=\gamma/2$) so that the normal real part is $\le -\delta$ and tangent real parts are uniformly smaller in modulus on $U\cap W^{c}$; continuity then yields $r_{0}>0$ such that these inequalities remain valid for all $|r|<r_{0}$ and all $|I|<I_{0}$.

With this uniform spectral separation in hand and with $F$ smooth in $(x,r)$, the hypotheses of the local Fenichel persistence theorem are satisfied on the chosen patch. Fenichel's result therefore produces, for every $|I|<I_{0}$ and $|r|<r_{0}$, a $C^{2}$ invariant manifold $W_{r}$ that is $C^{1}$‑close to $W^{c}$, normally hyperbolic, and locally attracting; the $C^{1}$ distance is $O(|r|)$ because the perturbation is $O(|r|)$ in $C^{1}$. Finally, the normal directions contracts exponentially on $W_{r}$ for all sufficiently small $r>0$. This completes the proof.
\end{proof}

\medskip

\noindent\textbf{Remark.}  
The parameter $I_{0}$ selects the local family of periodic orbits on the $r=0$ centre foliation for which normal contraction is uniform; $r_{0}$ is chosen by continuity so that this uniform contraction persists for all $|r|<r_{0}$. Both quantities are local and may be made explicit from the computations in Sections~\ref{sec3}--\ref{sec5} ; for the main text it suffices to refer to those sections for the algebraic expressions and to note the continuity arguments above.

\section{Phase-amplitude reduction for small interest rate}\label{sec7}
In the benchmark case \(r=0\), the planar \((\omega,\lambda)\) subsystem supports a continuous family of periodic orbits surrounding the interior equilibrium, and the full three-dimensional flow is organised by a normally attracting center-manifold foliated by three-dimensional cycles that project onto these \((\omega,\lambda)\) orbits, with the debt variable \(d\) transverse to the manifold and exponentially attracting.
 The normal hyperbolicity established in Section~\ref{sec6} justifies, for \(0<r\ll1\), a reduction that separates the fast phase motion along the oscillations from the slow evolution of their amplitude (and of the transverse debt coordinate) induced by the interest‑rate perturbation. In this section we introduce phase-amplitude coordinates adapted to the unperturbed family, derive the leading‑order equations for the slow amplitude and phase drift using averaging and projection onto the neutral directions, and state the precise validity of the reduced description (error bounds and time scales). The resulting averaged amplitude equation links the Hopf computation of Section~\ref{sec5} to the observable modulation of cycles in the full model and provides a theoretical description of how small positive interest rates deform and stabilise the oscillations.

\subsection{Decomposition of the planar vector field}
For $r=0$, denote the planar vector field by 
\begin{equation*}
F_{0}(\omega,\lambda) = 
\begin{pmatrix}
\omega\big(\Phi(\lambda)-\alpha\big) \\[2mm] 
\lambda\left( \dfrac{\kappa(1-\omega)}{\nu}-\alpha-\beta-\delta\right)%
\end{pmatrix}
.
\end{equation*}
When $r\neq0$, the profit share becomes $\pi=1-\omega-rd$, and the full
planar field depends on the transverse variable $d$ through $\kappa(\pi)$.
Writing 
\begin{equation*}
F_{r}(\omega,\lambda;d) = 
\begin{pmatrix}
\omega\big(\Phi(\lambda)-\alpha\big) \\[2mm] 
\lambda\left( \dfrac{\kappa(1-\omega-rd)}{\nu}-\alpha-\beta-\delta\right)%
\end{pmatrix}
.
\end{equation*}

For sufficiently small $r$, we perform a first-order expansion 
\begin{equation*}
\kappa (z-rd)=\kappa (z)-r\,d\,\kappa ^{\prime }(z)+O(r^{2}),
\end{equation*}%
which yields the decomposition 
\begin{equation*}
F_{r}(\omega ,\lambda ,d)=F_{0}(\omega ,\lambda )+r\,G(\omega ,\lambda
,d)+O(r^{2}),
\end{equation*}%
where 
\begin{equation*}
G(\omega ,\lambda ,d)=\left( 
\begin{array}{c}
0 \\ 
-\dfrac{\lambda \,\kappa ^{\prime }(1-\omega )\,d}{\nu }%
\end{array}%
\right) .
\end{equation*}%
The transverse variable $d$ does not decouple: its influence on the $(\omega
,\lambda )$ plane enters solely through the perturbation term $G$ at order $r
$.

\subsection{Phase-amplitude coordinates}

Since that for $r=0$ the plane $(\omega,\lambda)$ is foliated by a smooth
one-parameter family of periodic solutions 
\begin{equation*}
X(\theta;I)=(\omega(\theta;I),\lambda(\theta;I)),
\end{equation*}
where $I$ parametrizes the family and $\theta\in\mathbb{S}^{1}$ is a phase
variable. Let $T(I)$ be the period of the periodic orbit, and define its
angular frequency by 
\begin{equation*}
\Omega(I)=\frac{2\pi}{T(I)}.
\end{equation*}
Fix the phase parametrization by imposing $\theta(t)=\Omega(I)\,t$. Since $%
X(t)$ satisfies $\dot X = F_{0}(X)$ for $r=0$, we obtain the fundamental
identity so that 
\begin{equation*}
\Omega(I)\,\partial_{\theta}X(\theta;I)=F_{0}(X(\theta;I)).
\end{equation*}
For $r\neq0$ we represent solutions as $X(t)=X(\theta(t);I(t))$. By
differentiating, we obtain 
\begin{equation*}
\dot X=\partial_{\theta}X\,\dot\theta+\partial_{I} X\,\dot I.
\end{equation*}
Substituting this expression into $\dot X=F_{0}(X)+r\,G(X;d)+\mathcal{O}%
(r^{2})$ yields the key identity  
\begin{equation}
\partial _{\theta }X(\theta ;I)\,\big(\dot{\theta}-\Omega (I)\big)%
\;+\;\partial _{I}X(\theta ;I)\,\dot{I}\;=\;r\,G(\theta ;I)+\mathcal{O}%
(r^{2}).  \label{eq:phase-amplitude}
\end{equation}%
which shows that the perturbation must be projected onto the phase direction 
$\partial_{\theta}X$ and the amplitude direction $\partial_{I} X$. In
particular, on the invariant manifold the forcing can be evaluated along the 
$r=0$ cycle together with its associated transverse component $%
d_{0}(\theta;I)$, yielding 
\begin{equation*}
G(\theta;I)= 
\begin{pmatrix}
0 \\[2mm] 
-\lambda(\theta;I)\,\dfrac{\kappa^{\prime}(1-\omega(\theta;I))}{\nu}%
\,d_{0}(\theta;I)%
\end{pmatrix}.
\end{equation*}

\subsection{Adjoint modes and phase-amplitude reduction}
We now project the phase--amplitude equation onto the phase and amplitude
directions using the $L^{2}$-normalized adjoint modes .

Let $X(\theta;I)=(\omega(\theta;I),\lambda(\theta;I))$ denote a $2\pi $%
-periodic solution of the reduced planar system for $r=0$, parametrized by
the phase $\theta\in[0,2\pi)$ and an amplitude parameter $I$. Along this
cycle, the Jacobian matrix of the planar vector field is 
\begin{equation*}
J_{0}(\theta;I)=DF_{0}\bigl(X(\theta;I)\bigr).
\end{equation*}

We introduce the $L^{2}$ inner product between two $2\pi$-periodic
vector-valued functions $u,v:\mathbb{S}^{1}\to\mathbb{R}^{2}$ by 
\begin{equation*}
\langle u,v\rangle_{L^{2}} =\frac{1}{2\pi}\int_{0}^{2\pi} u(\theta)\cdot
v(\theta)\,d\theta.
\end{equation*}

The variational equation along the periodic orbit is 
\begin{equation*}
\Omega(I)\,\partial_{\theta}\xi= J_{0}(\theta;I)\,\xi.
\end{equation*}
Associated with this equation, we define the adjoint phase mode $Z_{\theta
}(\theta;I)$ as the $2\pi$-periodic solution of the adjoint system 
\begin{equation*}
\Omega(I)\,\partial_{\theta}Z_{\theta}(\theta;I) - J_{0}(\theta;I)^{\top}
Z_{\theta}(\theta;I)=0,
\end{equation*}
normalized by 
\begin{equation*}
\langle Z_{\theta},\partial_{\theta}X\rangle_{L^{2}}=1, \qquad\langle
Z_{\theta},\partial_{I} X\rangle_{L^{2}}=0.
\end{equation*}
Similarly, the adjoint amplitude mode $Z_{I}(\theta;I)$ is defined as the $%
2\pi$-periodic solution of 
\begin{equation*}
\Omega(I)\,\partial_{\theta}Z_{I}(\theta;I) - J_{0}(\theta;I)^{\top}
Z_{I}(\theta;I)=0,
\end{equation*}
with normalization 
\begin{equation*}
\langle Z_{I},\partial_{\theta}X\rangle_{L^{2}}=0, \qquad\langle
Z_{I},\partial_{I} X\rangle_{L^{2}}=1.
\end{equation*}

These adjoint modes provide projection operators onto the phase and
amplitude directions.  Using the adjoint modes and averaging over the phase, which is precisely the
reason why adjoint modes are introduced, one obtains the reduced system as
follows. \medskip

\begin{proposition}
Consider the phase--amplitude equation (\ref{eq:phase-amplitude}). Then, to
first order in $r$, the effective equations for the phase and the amplitude
are given by 
\begin{equation*}
\dot{\theta}=\Omega (I)+r\,\Omega _{1}(I)+\mathcal{O}(r^{2}),\qquad \dot{I}%
=r\,S_{1}(I)+\mathcal{O}(r^{2}),
\end{equation*}%
where 
\begin{equation*}
\Omega _{1}(I)=\int_{0}^{2\pi }\big\langle Z^{\theta }(\theta ;I),\,G(\theta
;I)\big\rangle\,d\theta ,\qquad S_{1}(I)=\int_{0}^{2\pi }\big\langle %
Z^{I}(\theta ;I),\,G(\theta ;I)\big\rangle\,d\theta .
\end{equation*}%
In particular, the effective phase and amplitude equations are obtained by
projecting~\eqref{eq:phase-amplitude} onto the adjoint modes and averaging
over the phase.
\end{proposition}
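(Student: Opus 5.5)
The plan is to pair the vector identity \eqref{eq:phase-amplitude} with the adjoint modes, which turns it into two scalar equations, and then average over the fast phase. The central fact is that $Z_{\theta}$ and $Z_{I}$ solve the adjoint variational equation. So for any solution $\xi$ of the forward variational equation $\Omega(I)\partial_\theta\xi=J_0\xi$, the pointwise product $Z\cdot\xi$ is independent of $\theta$:
\begin{equation*}
\Omega\,\partial_\theta(Z\cdot\xi)=(J_0^{\top}Z)\cdot\xi - Z\cdot(J_0\xi)=0 .
\end{equation*}
Differentiating the identity $\Omega(I)\partial_\theta X=F_0(X)$ in $\theta$ shows that $\partial_\theta X$ solves the variational equation. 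Hence $Z_\theta\cdot\partial_\theta X$ and $Z_I\cdot\partial_\theta X$ are constants, and the $L^2$ normalisations force them to equal $1$ and $0$ pointwise.

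For $\partial_I X$ we differentiate the same identity in $I$. This gives
\begin{equation*}
\Omega\,\partial_\theta(\partial_I X)-J_0\,\partial_I X=-\Omega'(I)\,\partial_\theta X .
\end{equation*}
It follows that $\partial_\theta(Z\cdot\partial_I X)=-(\Omega'/\Omega)\,Z\cdot\partial_\theta X$. For $Z_I$ the right-hand side vanishes, so $Z_I\cdot\partial_I X\equiv 1$ pointwise. For $Z_\theta$ the product $Z_\theta\cdot\partial_I X$ has derivative $-\Omega'/\Omega$, so it is affine in $\theta$. This is incompatible with $2\pi$-periodicity unless $\Omega'(I)=0$. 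This is the main obstacle. I would resolve it by replacing the pointwise biorthogonality with the $L^2$ normalisation stated in the paper, keeping a bounded correction that averages out. Equivalently, one may reparametrise $I$ locally so that the dual basis $(Z_\theta,Z_I)$ is taken pointwise as the rows of $[\partial_\theta X\ \partial_I X]^{-1}$. I would then check that the $Z_I$ row still satisfies the adjoint equation; this holds because the defect above only involves $\partial_\theta X$, which $Z_I$ annihilates.

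With the dual basis in hand, we take the dot product of \eqref{eq:phase-amplitude} with $Z_\theta$ and with $Z_I$. This gives
\begin{equation*}
\dot\theta-\Omega(I)=r\,Z_\theta\cdot G(\theta;I)+\mathcal O(r^2),\qquad \dot I=r\,Z_I\cdot G(\theta;I)+\mathcal O(r^2).
\end{equation*}
Here $G$ is evaluated on the invariant manifold $W_r$ of Theorem~\ref{thm:persistence}, using the $r=0$ transverse profile $d_0(\theta;I)$. The error from replacing the true $d$ by $d_0$ is $\mathcal O(r)$ by the $C^1$ estimate in Theorem~\ref{thm:persistence}, and it contributes only at $\mathcal O(r^2)$ because it multiplies $r$. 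The resulting system is in standard form for averaging: $I$ is slow, $\theta$ is fast with frequency $\Omega(I)\ge\Omega_0/2>0$ on $|I|<I_0$, and the right-hand sides are $C^1$ because $\Phi,\kappa\in C^2$.

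The final step is a near-identity change of variables $I=\tilde I+r\,w(\theta,\tilde I)$, where $w$ solves $\Omega\,\partial_\theta w=Z_I\cdot G-\overline{Z_I\cdot G}$. This $w$ is periodic because its right-hand side has zero mean. The same change, made for the $\theta$ equation, removes the oscillatory part at first order and leaves
\begin{equation*}
\dot{\tilde I}=r\,\overline{Z_I\cdot G}+\mathcal O(r^2),\qquad \dot\theta=\Omega(\tilde I)+r\,\overline{Z_\theta\cdot G}+\mathcal O(r^2).
\end{equation*}
These are exactly $S_1$ and $\Omega_1$, up to the $1/(2\pi)$ normalisation of the phase average, which I would absorb into the definition of the integrals to match the statement. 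Standard averaging estimates then give validity on time scales $\mathcal O(1/r)$.
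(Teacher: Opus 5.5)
Your argument has the same skeleton as the paper's proof: pair \eqref{eq:phase-amplitude} with the two dual modes, solve for $\dot\theta-\Omega(I)$ and $\dot I$, then average. It is, however, carried out more carefully, and the differences are substantive.

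The paper pulls $\dot\theta-\Omega$ and $\dot I$ out of the pairing and invokes the $L^2$ normalisations as though they held pointwise. It then ``averages'' simply by integrating the right-hand side over $\theta$, and it never checks that the modes exist.

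You project pointwise, which is right because the identity holds at the single current value of $\theta$. You show that $Z\cdot\partial_\theta X$ is constant along the orbit, so pointwise and $L^2$ normalisations agree. This leads you to the real obstruction: for a non-isochronous family ($\Omega'(I)\neq 0$, the generic Goodwin case), every $2\pi$-periodic solution of the adjoint equation (with the sign corrected as below) satisfies $Z\cdot\partial_\theta X\equiv 0$. So the paper's $Z^\theta$ cannot exist.

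Your repair is to take $(Z_\theta,Z_I)$ as the rows of $[\partial_\theta X\ \partial_I X]^{-1}$. This makes the projection exact; it is just Cramer's rule, and no reparametrisation of $I$ is needed. You also correctly verify that the $Z_I$ row is the genuine periodic adjoint mode. The $Z_\theta$ row instead satisfies the inhomogeneous equation $\Omega\,\partial_\theta Z_\theta+J_0^{\top}Z_\theta=\Omega'(I)\,Z_I$. So what you prove is the proposition with $Z^\theta$ read in this sense, which is the only sense in which it is well posed; the two readings coincide in the isochronous case.

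Your explicit near-identity averaging, and your use of Theorem~\ref{thm:persistence} to replace $d$ by $d_0(\theta;I)$ at cost $\mathcal O(r^2)$, also make precise what the paper leaves implicit. The equations hold for averaged variables, on time scales $\mathcal O(1/r)$. The paper's route buys brevity; yours buys a well-posed statement and a genuine derivation.

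A few points to tidy:
\begin{enumerate}
\item Your identity $\Omega\,\partial_\theta(Z\cdot\xi)=0$ requires the adjoint equation with the sign $\Omega\,\partial_\theta Z=-J_0^{\top}Z$. With the paper's displayed sign one gets $\Omega\,\partial_\theta(Z\cdot\xi)=2\,Z\cdot J_0\xi$, so state the correction explicitly.
\item Drop the first repair you float (``a bounded correction that averages out''). If $g:=Z_\theta\cdot\partial_I X\not\equiv 0$, the phase equation becomes $\dot\theta-\Omega=r\,(Z_\theta\cdot G-g\,Z_I\cdot G)+\mathcal O(r^2)$. The mean of $g\,Z_I\cdot G$ need not vanish, so this is not equivalent to the pointwise dual basis.
\item In the phase equation, the substitution $I=\tilde I+r\,w$ generates an extra term $r\,\Omega'(\tilde I)\,w$. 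You need a separate shift $\theta=\tilde\theta+r\,v(\tilde\theta,\tilde I)$ to remove the oscillatory part. You must also normalise $w$ to have zero mean; otherwise $\Omega'(\tilde I)\,\overline{w}$ is added to $\Omega_1$.
\item Since $\partial_\theta X\to 0$ at the equilibrium, the dual basis, and hence your remainder constants, are uniform only on compact subsets of $0<I<I_0$.
\item Your reading of the $2\pi$ factor is right. With a pointwise dot product the averaged coefficients are $\frac{1}{2\pi}\int_0^{2\pi}Z\cdot G\,d\theta$. This matches the statement only if its bracket carries the $1/(2\pi)$ of the paper's $L^2$ inner product.
\end{enumerate}
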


\begin{proof}
We take the inner product of (\ref{eq:phase-amplitude}) with $Z^{\theta
}(\theta ;I)$, obtaining 
\begin{equation*}
\big\langle Z^{\theta },\partial _{\theta }X\big\rangle\big(\dot{\theta}%
-\Omega (I)\big)+\big\langle Z^{\theta },\partial _{I}X\big\rangle\dot{I}=r\,%
\big\langle Z^{\theta },G(\theta ;I)\big\rangle+\mathcal{O}(r^{2}).
\end{equation*}%
Using the normalization conditions 
\begin{equation*}
\langle Z^{\theta },\partial _{\theta }X\rangle =1,\qquad \langle Z^{\theta
},\partial _{I}X\rangle =0,
\end{equation*}%
we obtain 
\begin{equation*}
\dot{\theta}-\Omega (I)=r\,\big\langle Z^{\theta }(\theta ;I),G(\theta ;I)%
\big\rangle+\mathcal{O}(r^{2}).
\end{equation*}%
Averaging over one phase period $[0,2\pi ]$ yields 
\begin{equation*}
\dot{\theta}=\Omega (I)+r\,\Omega _{1}(I)+\mathcal{O}(r^{2}),
\end{equation*}%
with 
\begin{equation*}
\Omega _{1}(I)=\int_{0}^{2\pi }\big\langle Z^{\theta }(\theta ;I),G(\theta
;I)\big\rangle\,d\theta .
\end{equation*}

Taking now the inner product of~\eqref{eq:phase-amplitude} with $%
Z^{I}(\theta ;I)$, we obtain 
\begin{equation*}
\big\langle Z^{I},\partial _{\theta }X\big\rangle\big(\dot{\theta}-\Omega (I)%
\big)+\big\langle Z^{I},\partial _{I}X\big\rangle\dot{I}=r\,\big\langle %
Z^{I},G(\theta ;I)\big\rangle+\mathcal{O}(r^{2}).
\end{equation*}%
Using 
\begin{equation*}
\langle Z^{I},\partial _{\theta }X\rangle =0,\qquad \langle Z^{I},\partial
_{I}X\rangle =1,
\end{equation*}%
we obtain 
\begin{equation*}
\dot{I}=r\,\big\langle Z^{I}(\theta ;I),G(\theta ;I)\big\rangle+\mathcal{O}%
(r^{2}).
\end{equation*}%
Averaging over the phase yields 
\begin{equation*}
\dot{I}=r\,S_{1}(I)+\mathcal{O}(r^{2}),
\end{equation*}%
where 
\begin{equation*}
S_{1}(I)=\int_{0}^{2\pi }\big\langle Z^{I}(\theta ;I),G(\theta ;I)\big\rangle%
\,d\theta .
\end{equation*}

This shows explicitly that the effective phase and amplitude equations are
obtained by projecting~\eqref{eq:phase-amplitude} onto the adjoint modes $%
Z^{\theta}$ and $Z^{I}$ and averaging over the phase.
\end{proof}

These equations show that the parameter $r$ projects the forcing onto the
adjoint modes, while phase averaging selects the effective components that
govern the slow dynamics.

The averaged phase--amplitude system yields the following results concerning
existence and stability:

\begin{itemize}
\item \textbf{Existence.} If there exists a value $I^{*}$ such that $%
S_{1}(I^{*})=0$, then a periodic orbit close to $I^{*}$ persists for $r\neq0 
$. Its effective frequency is shifted according to 
\begin{equation*}
\Omega(I;r) = \Omega(I^{*}) + r\,\Omega_{1}(I^{*}) + \mathcal{O}(r^{2}).
\end{equation*}

\item \textbf{Stability.} Stability in the amplitude direction is determined
by the sign of $S_{1}^{\prime}(I^{*})$. If $S_{1}^{\prime}(I^{*})<0$, the
periodic orbit is attracting; if $S_{1}^{\prime}(I^{*})>0$, it is repelling.
\end{itemize}

\subsection{Invariant graph of $d$ over the family}\label{sec74}
In the full system, the transverse variable $d$ evolves according to a
nonautonomous linear equation whose coefficients depend on the state $%
(\omega,\lambda)$. When the dynamics is restricted to the periodic orbit $%
X(\theta;I)$ of the uncoupled system ($r=0$), the equation for $d$ reduces
to a phase--dependent forced linear equation in the phase variable $\theta$.
This allows for the construction of an invariant graph of the form 
\begin{equation*}
d = d_{0}(\theta;I) + \mathcal{O}(r),
\end{equation*}
which describes how the transverse variable attaches to the cycle in the
absence of coupling.

For $r=0$, the equation for $d$ restricted to the cycle takes the form 
\begin{equation*}
\Omega(I)\,\partial_{\theta} d_{0}(\theta;I) = a(\theta;I)\,d_{0}(\theta;I)
+ b(\theta;I).
\end{equation*}
Evaluating the $d$-equation along the orbit $X(\theta;I)$, the coefficients $%
a(\theta;I)$ and $b(\theta;I)$ become $2\pi$-periodic functions given by 
\begin{align*}
a(\theta;I) & = \delta- \frac{\kappa\big(1-\omega(\theta;I)\big)}{\nu}, \\
b(\theta;I) & = \kappa\big(1-\omega(\theta;I)) - 1-\omega(\theta;I).
\end{align*}

The homogeneous equation 
\begin{equation*}
\Omega(I)\,\partial_{\theta} d = a(\theta;I)\,d
\end{equation*}
has Floquet multiplier 
\begin{equation}\label{MIF}
M(I) = \exp\!\left( \int_{0}^{2\pi} \frac{a(\theta;I)}{\Omega(I)}
\,d\theta\right) .
\end{equation}
The condition $M(I)\neq1$ guarantees the existence of a $2\pi$-periodic
solution $d_{0}(\theta;I)$ and hence of an invariant graph.

\begin{itemize}
\item If $M(I)<1$, the transverse direction is stable and the invariant
graph is attracting.

\item If $M(I)>1$, the transverse direction is unstable, but the invariant
graph still exists (repelling).

\item If $M(I)=1$, the transverse direction is neutral and no periodic
invariant graph exists.
\end{itemize}

\section{Numerical experiments} \label{sec9}
In this section, we illustrate the theoretical results by means of numerical simulations of the model. These experiments serve two main purposes. First, they allow us to investigate how the stability of the interior equilibrium changes as the slope of the investment function varies. Second, they provide numerical evidence of the emergence of oscillatory dynamics through a Hopf bifurcation.

In the analytical part of the paper, the stability of the equilibrium is governed by the quantity
$$
K_2=\frac{(d_0-\nu)\kappa'(\pi_0)+\nu}{\nu},
$$
which arises from the Jacobian matrix and determines the Hopf bifurcation condition $rK_2=0$. 

In the numerical implementation, however, $K_2$ is not introduced as an independent parameter. Instead, it is determined by the derivative $\kappa'(\pi_0)$, which depends on the shape of the investment function. In particular, the parameter $\kappa_2$ controls the slope of $\kappa(\pi)$ around the equilibrium. Consequently, varying $\kappa_2$ induces a corresponding change in the theoretical quantity $K_2(\kappa_2)$, and therefore in the Hopf indicator
$$
\eta(\kappa_2)=rK_2(\kappa_2).
$$
The critical value observed numerically corresponds to the point at which this analytical condition is satisfied.

\subsection{Functional forms and benchmark parameters}
We begin by specifying the functional forms and calibrating the model parameters. The choices adopted here are inspired by the calibration presented in Section 5.4 of \cite{CostaLima2013}, which provides a  benchmark for the Keen model.

The Phillips curve is given by
$$
\Phi(\lambda)=\frac{\phi_1}{(1-\lambda)^2}-\phi_0.
$$
The parameters $\phi_0$ and $\phi_1$ are chosen so that the wage growth rate vanishes at a target employment level $\lambda^*$. Since the wage share dynamics satisfy
$$
\dot{\omega}=\omega\bigl(\Phi(\lambda)-\alpha\bigr),
$$
the steady-state condition $\dot{\omega}=0$ requires
$\Phi(\lambda^*)=\alpha.$ We set $\lambda^*=0.96,$
and determine $\phi_0$ and $\phi_1$ so that this condition holds together with the calibration
$$
\Phi(0)=-0.04.
$$
These two conditions uniquely determine the parameters of the Phillips curve.

Investment is described by the bounded function
$$
\kappa(\pi)=\kappa_0+\kappa_1 \arctan\!\bigl(\kappa_2\pi+\kappa_3\bigr),
$$
where $\kappa_0$ and $\kappa_1$ fix the lower and upper bounds of the investment rate. The shift parameter $\kappa_3$ is chosen so that the steady-state condition
$$
\kappa(\pi^*)=\nu(\alpha+\beta+\delta)
$$
holds at the target profit share
$$
\pi^*=0.16.
$$

The benchmark parameter values used in the simulations are
$$
\alpha=0.025,\qquad
\beta=0.02,\qquad
\delta=0.01,\qquad
\nu=3,\qquad
r=0.003.
$$

For these values, the interior equilibrium is
$$
(\omega _{0},\lambda _{0},d_{0})
\approx
(0.839667,\,0.96,\,0.111111).
$$

\subsection{Numerical continuation and stability of periodic orbits}

To study the change in stability of the interior equilibrium, we perform a numerical continuation with respect to the parameter $\kappa_2$, which controls the slope of the investment function around the steady state. From an economic perspective, $\kappa_2$ measures the sensitivity of investment to variations in the profit share: larger values correspond to a stronger accelerator effect, while smaller values imply a more sluggish adjustment.

The procedure consists of varying $\kappa_2$ within a prescribed interval while keeping all other parameters fixed. For each value of $\kappa_2$, the dependent parameter $\kappa_3$ is recomputed so that the equilibrium condition
\[
\kappa(\pi^*)=\nu(\alpha+\beta+\delta)
\]
remains satisfied. In this way, the analysis reduces to a one-parameter family indexed by $\kappa_2$, in which the remaining coefficients are determined endogenously by the equilibrium requirement.

For each value of $\kappa_2$, we evaluate the Jacobian matrix of the full system at the interior equilibrium $(\omega_0,\lambda_0,d_0)$ and compute its eigenvalues. In particular, we track the real part of the complex conjugate pair
\[
\lambda_{1,2}=a(\kappa_2)\pm i\,\Omega_H(\kappa_2),
\]
since the sign of $a(\kappa_2)$ determines local stability. The crossing of $a(\kappa_2)$ through zero corresponds to a transition from a stable focus to an unstable one, indicating the onset of self-sustained oscillations through a Hopf bifurcation.

The critical continuation value $\kappa_2^*$ is determined numerically by a bisection procedure applied to the function $a(\kappa_2)$. This allows us to locate the value at which the equilibrium loses stability and to compare it with the analytical Hopf condition $rK_2=0$. Within this continuation framework, the theoretical quantity
\[
\eta(\kappa_2)=rK_2(\kappa_2)
\]
becomes a scalar function of the continuation parameter, so that the numerical stability change detected through $a(\kappa_2)$ can be directly compared with the analytical condition $\eta(\kappa_2)=0$. All numerical computations and simulations are performed in \textit{Mathematica}.

To complement the local stability analysis, we compute the amplitude $A_\omega$ of the oscillations in $\omega$ as a function of the parameter $\eta(\kappa_2)$. As shown in Figure~\ref{fig:amplitude}, the amplitude increases smoothly from zero as the parameter crosses the critical value. The bifurcation point corresponds to $\eta(\kappa_2)=0$, at which the amplitude vanishes, which is consistent with the analytical Hopf condition derived in Section~\ref{sec5}. This behavior is consistent with a supercritical Hopf bifurcation, leading to the emergence of a stable periodic orbit of small amplitude near the threshold.

From an economic perspective, this indicates that cyclical fluctuations arise gradually as investment becomes more responsive to profitability, rather than through an abrupt transition. 

More precisely, the numerical continuation reveals a continuous branch of periodic solutions emanating from the Hopf point. Each point along this branch corresponds to a periodic orbit of the full dynamical system, and the associated amplitude provides a natural quantitative measure of the size of the resulting economic cycles.

In the updated numerical analysis, we highlight two representative points along this branch: a red point located closer to the Hopf bifurcation threshold and a green point located farther away. These two points are shown in Figure~\ref{fig:amplitude} and are used to illustrate how both the amplitude and the stability properties of the periodic orbits evolve along the branch of solutions.

For the red point, corresponding to $\kappa_2 \approx 13.09$, the periodic orbit has amplitude $A_\omega \approx 0.001985$ and period $T \approx 6.65$. The associated transverse Floquet multiplier computed from the full system is $M \approx 0.74138$, while the reduced formulation yields $M \approx 0.74081$. For the green point, corresponding to $\kappa_2 \approx 12.49$, the periodic orbit has amplitude $A_\omega \approx 0.002642$ and period $T \approx 6.80$. In this case, the full-system multiplier is $M \approx 0.73639$, and the reduced approximation gives $M \approx 0.73584$.

\begin{table}[h]
\centering
\begin{tabular}{lccccc}
\hline
Point & $\kappa_2$ & Amplitude & Period & $M$ (full) & $M$ (reduced) \\
\hline
Red   & 13.09 & 0.00198499 & 6.65 & 0.74137584 & 0.74081439 \\
Green & 12.49 & 0.00264177 & 6.80 & 0.73638640 & 0.73584211 \\
\hline
\end{tabular}
\caption{Numerical characteristics of two representative periodic orbits corresponding to the red and green points highlighted in Figure~\ref{fig:amplitude}.}
\label{tab:orbits}
\end{table}

These numerical results reveal two important features. First, the amplitude of the oscillations increases as one moves away from the bifurcation point, as illustrated by the larger value of $A_\omega$ for the green orbit. This confirms that the oscillation size grows continuously along the branch of periodic solutions. This behavior is characteristic of a supercritical Hopf bifurcation, in which stable periodic orbits grow continuously in size as the parameter moves further into the unstable region.

Second, the Floquet multipliers remain strictly below one in both cases, indicating that the periodic orbits are transversely stable and therefore dynamically relevant. Moreover, the close agreement between the multipliers computed from the full system and those obtained from the reduced formulation provides strong numerical support for the validity of the reduced model introduced in Section~\ref{sec7}.

The geometric structure of these periodic solutions is illustrated in Figures~\ref{fig:orbit2D} and~\ref{fig:orbit3D}. Rather than displaying long time simulations, these figures focus on a single period of the orbit.

Figure~\ref{fig:orbit2D} shows the comparison between the red and green periodic orbits in the phase plane $(\omega,\lambda)$. The green orbit encloses a larger region around the interior equilibrium, reflecting the increase in amplitude observed in Figure~\ref{fig:amplitude}. This confirms that the continuation parameter $\kappa_2$ directly controls the size of the endogenous periodic orbits.

\begin{figure}[h!]
\centering
\includegraphics[width=0.5\textwidth]{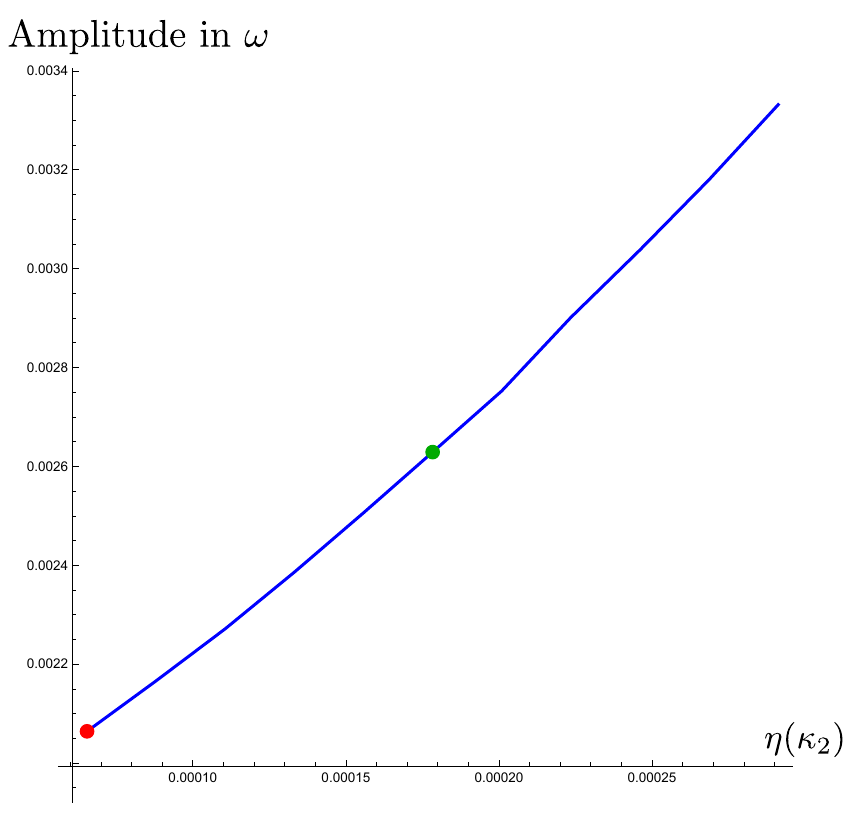}
\caption{Amplitude of the periodic orbits in $\omega$ as a function of $\eta(\kappa_2)$. Each point corresponds to a periodic orbit. The red and green markers indicate the two representative periodic orbits used for comparison.}
\label{fig:amplitude}
\end{figure}

\begin{figure}[h!]
\centering
\includegraphics[width=0.45\textwidth]{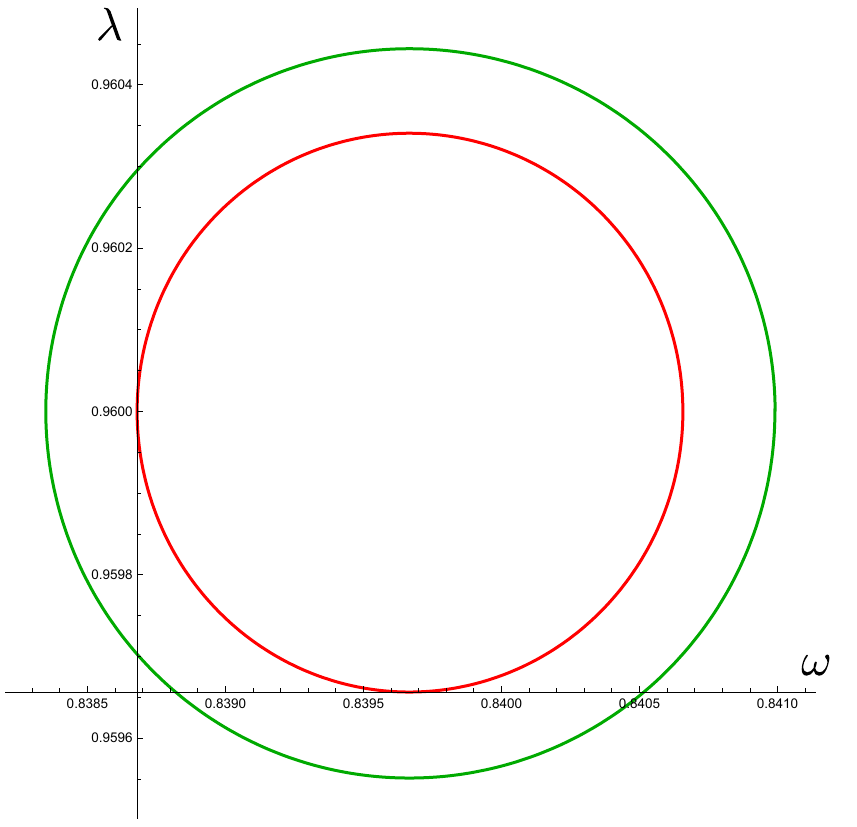}
\caption{Comparison of two representative periodic orbits in the phase plane $(\omega,\lambda)$. The red orbit corresponds to $\kappa_2 \approx 13.09$ and the green orbit to $\kappa_2 \approx 12.49$.}
\label{fig:orbit2D}
\end{figure}

\begin{figure}[h!]
\centering
\includegraphics[width=0.45\textwidth]{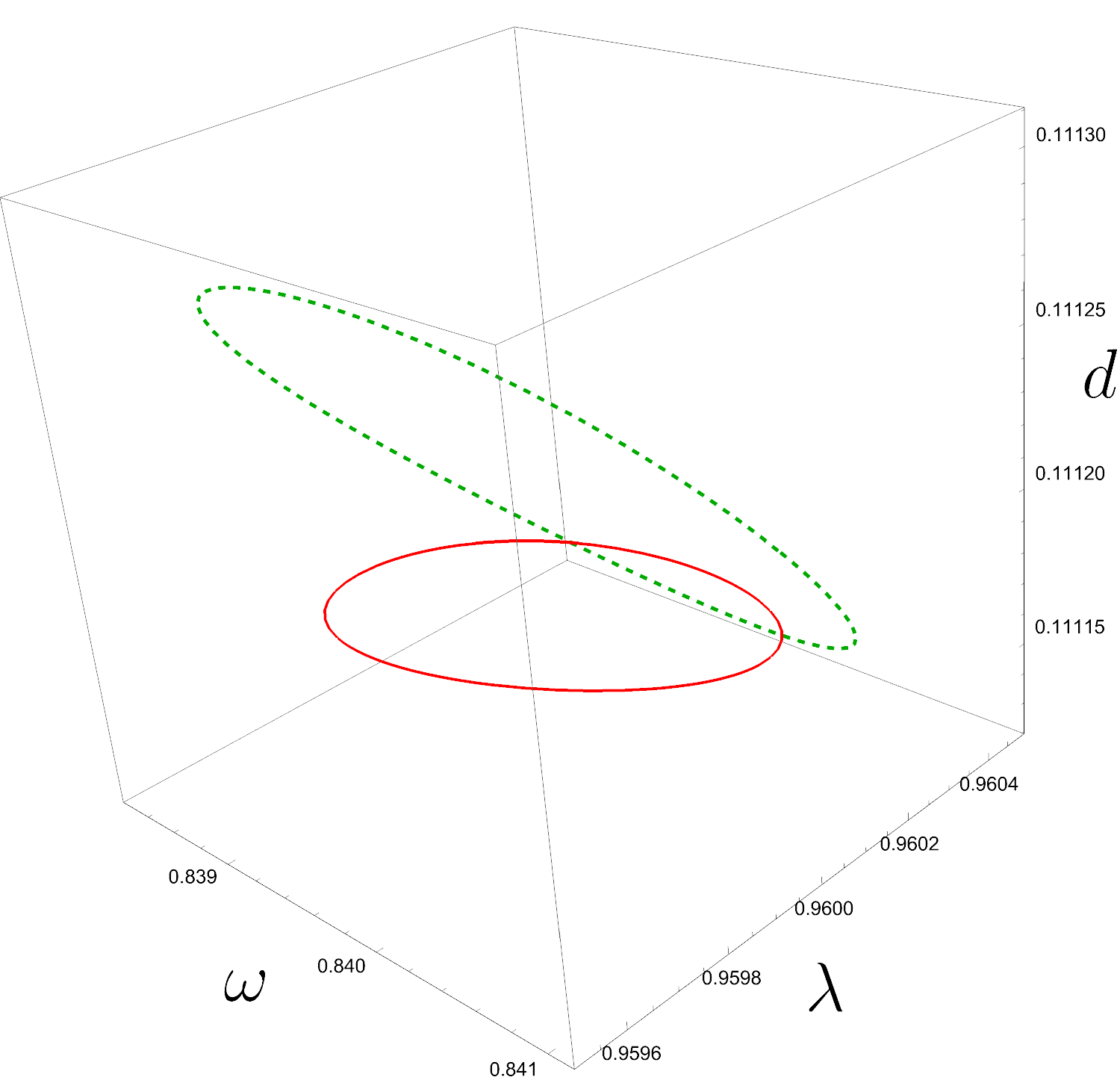}
\caption{Comparison of the two representative periodic orbits (red and green) in the full state space $(\omega, \lambda, d)$, illustrating how the debt variable embeds the planar dynamics into a three-dimensional invariant manifold.}
\label{fig:orbit3D}
\end{figure}

Figure~\ref{fig:orbit3D} presents the same comparison in the full three-dimensional state space $(\omega,\lambda,d)$. In this representation, the role of the debt variable becomes apparent, as it lifts the planar orbits into three-dimensional trajectories.

Taken together, these results confirm that the Hopf bifurcation generates a stable family of periodic solutions whose amplitude increases smoothly with the continuation parameter. In particular, the fact that $M<1$ for both representative cases indicates that these cycles are transversely attracting, providing a clear quantitative link between the analytical bifurcation results and the observed dynamics of the model.

\section{Conclusion} \label{sec8}
The analysis developed in this paper shows that the interaction between wage
dynamics, employment, and private debt in the Keen model gives rise to a
rich and economically meaningful structure of endogenous fluctuations. When
the amplitude of the Goodwin cycle is small, the periodic orbit $(\omega
_{I}(t),\lambda _{I}(t))$ remains sufficiently far from the wage-share
boundary $\omega =1$, ensuring that the profit share $1-\omega _{I}(t)$
stays uniformly positive. Because the investment function $\kappa $ is
increasing and satisfies the natural condition $\kappa (0)=0$, average
investment along the cycle exceeds depreciation, and the stability condition 
\begin{equation*}
 \int_{0}^{T(I)}\frac{\kappa (1-\omega _{I}(t))}{\nu }\,dt>\delta 
\end{equation*}%
holds. In this regime, the debt direction is stable and the full dynamics
remain close to the periodic orbit on the invariant manifold. As the
amplitude of the cycle grows, however, the orbit approaches $\omega =1$,
compressing the profit share and causing investment to collapse during part
of the cycle. The resulting decline in the time-averaged investment rate
eventually violates the stability condition. Economically, this corresponds
to a situation in which wages absorb nearly all output for a significant
portion of the cycle, profits fall to near zero, and firms sharply reduce
investment. Depreciation then dominates, and the debt ratio becomes
unstable. When the interest rate is positive, this loss of stability has
important consequences. If the attracting cycle on the invariant manifold
becomes too large, the phases of near-zero profitability can trigger a debt
spiral that pushes trajectories away from the manifold. Thus, maintaining
dynamics close to the equilibrium or to the stable cycle requires parameter
values that keep the amplitude of the cycle sufficiently small. In economic
terms, this means preventing the wage share from approaching unity too
closely, so that profitability never collapses to the point where investment
becomes negligible. 

A central determinant of the stability of the periodic Hopf orbit is the quantity defined in \eqref{eq:charpoly_good},
\begin{equation*}
K_{2}=\frac{(d_{0}-\nu)\kappa'(\pi_{0})+\nu}{\nu},
\end{equation*}
which admits a transparent economic interpretation.The term $d_{0}-\nu $
compares the equilibrium debt ratio with the capital--output ratio,
distinguishing highly leveraged economies $(d_{0}>\nu )$ from moderately
leveraged ones $(d_{0}<\nu )$. The factor $\kappa ^{\prime }(\pi _{0})$
measures the sensitivity of investment to profitability. The combination $%
(d_{0}-\nu )\kappa ^{\prime }(\pi _{0})+\nu $ therefore indicates whether a
marginal increase in profitability amplifies or dampens the debt feedback.
When leverage is high or the investment response is not sufficiently strong,
this expression is positive; when leverage is low and the investment
response is steep, it may become negative. Because $\mu =rK_{2}$ and $r>0$,
the sign of $K_{2}$ determines the stability of the equilibrium. If $K_{2}<0$%
, the debt--investment interaction is stabilizing and the equilibrium is
locally asymptotically stable. If $K_{2}>0$, the feedback becomes
destabilizing, and the equilibrium loses stability through a Hopf
bifurcation, giving rise to a stable limit cycle. Since realistic economies
satisfy $r>0$, the sign of $K_{2}$ effectively determines whether the
long-run dynamics converge to the steady state or exhibit persistent
endogenous cycles. Near the Hopf bifurcation, where $rK_{2}=0$, the
remaining eigenvalue is negative, so the associated invariant manifold is
normally hyperbolic with stability in the transverse direction.
Consequently, any periodic solutions must remain confined to this manifold. 

In economic terms, the periodic orbit bifurcating at $K_{2}=0$ for $r>0$
plays the role of an endogenous business cycle: recurrent, persistent, and
structurally determined by the interaction of wages, employment, and debt.
Although such cycles do not correspond to a  ``bad” equilibrium, inappropriate parameter values may still
drive the system toward undesirable outcomes. When this occurs, and provided
the interest rate is small, convergence takes place on a slow time scale,
consistent with the condition $M<1$. Overall, the results of this paper
provide a rigorous mathematical foundation for the emergence of endogenous
cycles in the Keen model. They clarify the precise conditions under which
the good equilibrium is stable, when it loses stability through a Hopf
bifurcation, and how the resulting dynamics evolve on a normally hyperbolic
invariant manifold. These findings offer a coherent dynamical explanation
for the oscillatory behavior often observed in numerical simulations and
highlight the central role of leverage, investment sensitivity, and wage
dynamics in shaping macroeconomic fluctuations.


\begin{thebibliography}{9}
\bibitem{Carr1981} J.~Carr, 
\newblock {\em Applications of Centre Manifold
Theory}, \newblock Applied Mathematical Sciences, Vol.~35, Springer-Verlag,
New York, 1981.

\bibitem{Chicone2006} C.~Chicone, 
\newblock {\em Ordinary Differential
Equations with Applications}, 2nd ed., \newblock Springer, 2006.

\bibitem{CostaLima2013} B.~R.~C.~da~Costa~Lima, 
\newblock {\em The Dynamical
Systems Approach to Macroeconomics}, \newblock Ph.D. Thesis, McMaster
University, 2013.

\bibitem{Keen1995} S.~Keen, \newblock Finance and economic breakdown:
modeling Minsky's \textquotedblleft financial instability
hypothesis\textquotedblright , 
\newblock {\em
Journal of Post Keynesian Economics} \textbf{17} (1995), 607--635.
\bibitem{GrasselliCostaLima2012}
M.~R.~Grasselli and B.~Costa~Lima,
``An analysis of the Keen model for credit expansion, asset price bubbles and financial fragility,''
\textit{Mathematics and Financial Economics},
\textbf{6} (2012), 191--210.
doi:10.1007/s11579-012-0071-8.

\bibitem{GuckenheimerHolmes1983} J.~Guckenheimer and P.~Holmes, \newblock%
\emph{Nonlinear Oscillations, Dynamical Systems, and Bifurcations of Vector
Fields}, \newblock Springer, 1983.

\bibitem{Hale} Hale, J. K. (2009). \textit{Ordinary Differential Equations}.
New York: Dover Publications.

\bibitem{Kuznetsov2004} Y.~A.~Kuznetsov, 
\newblock {\em Elements of Applied
Bifurcation Theory}, 3rd ed., \newblock Springer, 2004.

\bibitem{Perko2013} L.~Perko, 
\newblock {\em Differential Equations and
Dynamical Systems}, \newblock3rd ed., Texts in Applied Mathematics, Vol.~7,
Springer-Verlag, New York, 2001.

\bibitem{Wiggins2003} S.~Wiggins, \textit{Introduction to Applied Nonlinear
Dynamical Systems and Chaos}, Springer, 2nd edition, 2003.
\bibitem{WigginsNHIM}
S.~Wiggins,
\textit{Normally Hyperbolic Invariant Manifolds in Dynamical Systems},
Applied Mathematical Sciences, Vol.~105,
Springer--Verlag, New York, 1994.

\end{thebibliography}
\end{document}